\title[Weak cuspidality and ...]{Weak cuspidality and the Howe correspondence}
\author{Jesua Epequin}
\date{}
\DeclareMathOperator{\End}{End}
\DeclareMathOperator{\U}{U}
\DeclareMathOperator{\Or}{O}
\DeclareMathOperator{\GL}{GL}
\DeclareMathOperator{\Sp}{Sp}
\DeclareMathOperator{\Irr}{Irr}
\DeclareMathOperator{\Ind}{Ind}
\DeclareMathOperator{\SO}{SO}
\DeclareMathOperator{\sgn}{sgn}
\DeclareMathOperator{\rk}{rk}
\DeclareMathOperator{\diag}{diag}
\DeclareMathOperator{\tr}{tr}
\DeclareMathOperator{\rank}{rank}
\DeclareSymbolFont{symbols2}{LS1}{stixfrak} {m} {n}
\DeclareMathSymbol{\operp}{\mathbin}{symbols2}{"A8}
\tikzset{labr/.style={anchor=north, rotate=90, inner sep=1mm}}
\tikzset{labl/.style={anchor=south, rotate=90, inner sep=.5mm}}
\newtheorem{thm}{Theorem}
\newtheorem{defi}{Definition}
\newtheorem{lem}{Lemma}
\newtheorem{prop}{Proposition}
\newtheorem{cor}{Corollary}
\newtheorem{conj}{Conjecture}
\newtheorem*{thm*}{Theorem}
\begin{document}
\begin{abstract}
We study the effect of the Howe correspondence on Harish-Chandra series for type I dual pairs over finite fields with odd characteristic. We define a bijection obtained from this correspondence, and enjoying the property of ``having minimal unipotent support''. Finally, we examine the interaction between the Howe correspondence and weak cuspidality.  
\end{abstract}
\maketitle
\section*{Introduction}
Let $\mathbb{F}_q$ be a finite field with $q$ elements and odd characteristic. A pair of reductive subgroups of $\Sp_{2n}(q)$, where each one is the centralizer of the other, is called \emph{reductive dual pair}. We focus our attention on \emph{irreducible} dual pairs (cf. \cite{Kudla}). One such pair can be \emph{linear} $(\GL_m(q),\GL_{m'}(q))$, \emph{unitary} $(\U_m(q),\U_{m'}(q))$ or \emph{symplectic-orthogonal} $(\Sp_{2m}(q),\Or_{m'}(q))$, with $n=mm'$ in all cases. The last two are also called \emph{type I} dual pairs, and the groups belonging to them are called \emph{type I groups}.

For a reductive dual pair $(G_m,G'_{m'})$, Roger Howe defined the a correspondence $\Theta_{m,m'} : \mathscr{R}(G_m)\rightarrow\mathscr{R}(G'_{m'})$ between their categories of complex representations. Known as \emph{Howe correspondance}, it arises from the restriction to $G_m\times G'_{m'}$ of the \emph{Weil representation} $\omega$ of the symplectic group $\Sp_{2n}(q)$ (cf. \cite{Howe}). 

Let $\mathbf{G}$ be a connected reductive group defined over $\mathbb{F}_q$, and $\mathbf{G}^*$ its dual group. Denote by $G$ and $G^*$ their groups of rational points. In \cite{Lusztig3} Lusztig defined a partition of the set $\mathscr{E}(G)$ of irreducible representations of $G$ into \emph{Lusztig series} $\mathscr{E}(G,(s))$. These series are parametrized by rational conjugacy classes of semisimple elements $s$ of $G^*$. The elements of $\mathscr{E}(G,(1))$ are called \emph{unipotent representations} of $G$.

In general, the Howe correspondence is not compatible with unipotent representations. Therefore, we make use of a similar correspondance $\Theta^\flat_{m,m'}:\mathscr{R}(G_m)\rightarrow\mathscr{R}(G'_{m'})$ arising from a Weil representation $\omega^\flat$ introduced by G\'erardin in \cite{Gerardin}. This correspondence preserves unipotent representations (cf. Proposition 2.3 in \cite{AMR}). Since in this paper we only work with this modified Howe correspondence we will drop the superscript and denote $\Theta^\flat_{m,m'}$, and $\omega^\flat$ by $\Theta_{m,m'}$, and $\omega$ respectively.

In \cite{Lusztig4} Lusztig found that, for every irreducible representation $\pi$ of a connected reductive group $\mathbf{G}$, there is a unique rational unipotent class $\mathscr{O}_{\pi}$ in $\mathbf{G}$ which has the property that $\sum_{x\in \mathscr{O}_{\pi}(q)}\pi(x)$ is non trivial, and that has maximal dimension among classes with this property. This class is called the \emph{unipotent support} of $\pi$. Such classes are ordered by the relation given by $\mathcal{O}'\preceq\mathcal{O}$, if and only if $\mathcal{O}'\subset\overline{\mathcal{O}}$, referred to as the \emph{closure order}. 

 In \cite{Epequin} we defined a bijective mapping $\theta_{G,G'}:\mathscr{E}(G,1)\rightarrow\mathscr{E}(G',1)$ between the unipotent series of the members of a type I dual pair $(G,G')$; in such a way that, for a unipotent representation $\pi$ of $G$

-- The representation $\theta_{G,G'}(\pi)$ occurs in $\Theta_{G,G'}(\pi)$.

-- If $\pi'$ belongs to $\Theta_{G,G'}(\pi)$, then $\mathcal{O}_{\theta(\pi)}\preceq\mathcal{O}_{\pi'}$ 

 The last item above asserts that $\theta_{G,G'}(\pi)$ has the smallest unipotent support among irreducible representations in $\Theta_{G,G'}(\pi)$, it is in this sense minimal. 
 
We want to extend this definition to the whole set of irreducible representations. Naturally, any attempt to do this must make use of the \emph{Lusztig bijection}. This bijection, known as well as the \emph{Lusztig correspondence} is a one-to-one map between the series $\mathscr{E}(G,(s))$ and the series $\mathscr{E}(C_{G^*}(s),(1))$, of unipotent representations of the centralizer $C_{G^*}(s)$. For classical groups, this centralizer can be expressed as a product of smaller reductive groups.
 
For instance, when $G$ is a unitary group 
$$
C_{G^*}(s) \simeq G_\#\times G_{(1)}
$$
where $G_\#$ is a product of linear or unitary groups, and $G_{(1)}$ is a unitary group. We obtain in this way a modified Lusztig correspondence $\Xi_s$ sending $\pi \in \mathscr{E}(G,(s))$ to $\pi_\#\otimes\pi_{(1)}\in \mathscr{E}(G_\# \times G_{(1)},1)$. For a unitary dual pair $(G,G')$ this bijection fits in the commutative diagram 

\begin{center} 
\begin{tikzcd}
 \mathscr{E}(G,(s)) \arrow[d, "\Theta_{G,G'}"] \arrow[r, "\Xi_{s}", "\sim" swap] 
                                                &  \mathscr{E}(G_\#\times G_{(1)},1) \arrow[d, "\iota\hspace{0.05cm}\otimes\hspace{0.05cm}\Theta_{G_{(1)},G'_{(1)}}"] \\
\mathscr{R}(G',(s')) \arrow[r,  "\Xi_{s'}", "\sim" swap] 
 												   & \mathscr{R}(G_\#\times G'_{(1)},1).
 \end{tikzcd}
\end{center}

This motivates us to define $\theta_{G,G'}(\pi)$, for $\pi$ in $\mathscr{E}(G)$, to be the unique irreducible representation of $G'$ such that 
$$
\Xi_{s'}(\theta_{G,G'}(\pi))=\tilde{\pi}_\#\times \theta_{G_{(1)},G'_{(1)}}(\pi_{(1)}).
$$ 
We also define such a mapping $\theta_{G,G'}:\mathscr{E}(G)\rightarrow\mathscr{E}(G')$, for symplectic-orthogonal pairs. As we could expect, this bijection also selects representations with smallest unipotent supports. This is our first result (Theorem \ref{IncreasingLusztig}).

\begin{thm*}
Let $\pi$, $\tau$ belong to $\mathscr{E}(G,(s))$, and $\pi_u$, $\tau_u$ denote the corresponding unipotent representations of $C_{G^*}(s)$. If $\mathcal{O}_{\pi_u}\preceq\mathcal{O}_{\tau_u}$ then $\mathcal{O}_{\pi}\preceq\mathcal{O}_{\tau}$. In particular, for $\pi$ in $\mathscr{E}(G)$, the subrepresentation $\theta_{G,G'}(\pi)$ of $\Theta_{G,G'}(\pi)$ has the smallest unipotent support.
\end{thm*}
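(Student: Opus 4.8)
The plan is to prove the stated order inequality first, in the form of a statement about a fixed combinatorial map, and then to read off the minimality of $\theta_{G,G'}(\pi)$ from it together with the commutative diagram above and the two bulleted properties of $\theta_{G_{(1)},G'_{(1)}}$ recalled from \cite{Epequin}. For the first step, recall that for a classical group $G$ and a semisimple $s\in G^*$ the centralizer $C_{G^*}(s)$ is a direct product of classical groups whose types and ranks depend only on the class $(s)$, and that the closure order on its unipotent classes is the product of the closure orders on the factors. By Lusztig's description of the unipotent support as the special class attached to the family of a character \cite{Lusztig4}, together with the fact that the (modified) Lusztig correspondence $\Xi_s$ carries the families of $\mathscr{E}(C_{G^*}(s),1)$ onto the pieces of $\mathscr{E}(G,(s))$, the class $\mathcal{O}_\pi$ depends on $\pi\in\mathscr{E}(G,(s))$ only through $\mathcal{O}_{\pi_u}$; write $\mathcal{O}_\pi=\mathfrak{d}_{(s)}(\mathcal{O}_{\pi_u})$ for the resulting map from special unipotent classes of $C_{G^*}(s)$ to special unipotent classes of $G$, and likewise $\mathfrak{d}_{(s')}$ on the $G'$-side. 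It then suffices to prove that $\mathfrak{d}_{(s)}$ is order-preserving, for then $\mathcal{O}_{\pi_u}\preceq\mathcal{O}_{\tau_u}$ gives $\mathcal{O}_\pi=\mathfrak{d}_{(s)}(\mathcal{O}_{\pi_u})\preceq\mathfrak{d}_{(s)}(\mathcal{O}_{\tau_u})=\mathcal{O}_\tau$.

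To prove that $\mathfrak{d}_{(s)}$ is order-preserving I would pass to symbols. A unipotent character of a classical group corresponds to a symbol, and its unipotent support to the partition read off from the special symbol similar to it; the Lusztig correspondence assembles the tuple of symbols indexed by the eigenvalues of $s$ into a single symbol of $G$ by a fixed recipe (shift the arrays by prescribed defects, then interleave the resulting $\beta$-sets). Translating this, $\mathfrak{d}_{(s)}$ is a composite of three operations on partitions: the ``quotient'' assembly of the partitions coming from the various eigenvalues into a single partition; the addition of a fixed partition recording the defects; and Spaltenstein's collapse to the nearest special class in the ambient classical group. The middle operation is obviously order-preserving; the last is order-preserving by Spaltenstein's theory of special classes; and the first is order-preserving because merging, doubling and shifting of $\beta$-sets all respect dominance — indeed, if $\sum_{i\le k}a_i\le\sum_{i\le k}a'_i$ and $\sum_{i\le k}b_i\le\sum_{i\le k}b'_i$ hold for all $k$, then the same holds for the decreasing merges, since the $k$-th partial sum of a merge equals $\max_{p+q=k}\bigl(\sum_{i\le p}a_i+\sum_{i\le q}b_i\bigr)$. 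The symplectic--orthogonal case is identical, the only extra bookkeeping being that of the $\pm1$-eigenspaces. The hard part here is not the monotonicity, which reduces to this elementary $\beta$-set computation plus the known order-preservation of Spaltenstein's collapse, but rather the \emph{identification} of $\mathfrak{d}_{(s)}$: one must pin down, for each class $(s)$, precisely which defects and which assembly occur in $\Xi_s$, match this on the nose with Lusztig's recipe for the unipotent support, and check that the twist $\pi_\#\rightsquigarrow\tilde{\pi}_\#$ built into the modified Lusztig correspondence changes neither $\mathcal{O}_{\pi_\#}$ nor the special collapse of the assembled partition. This is the step I expect to be the main obstacle.

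For the ``in particular'' clause, let $\pi\in\mathscr{E}(G,(s))$ and let $\pi'$ be any irreducible constituent of $\Theta_{G,G'}(\pi)$. Since $\Theta_{G,G'}$ maps $\mathscr{E}(G,(s))$ into $\mathscr{R}(G',(s'))$ we have $\pi'\in\mathscr{E}(G',(s'))$, and the commutative diagram gives $\Xi_{s'}(\pi')=\tilde{\pi}_\#\otimes\pi'_{(1)}$ with $\pi'_{(1)}$ an irreducible constituent of $\Theta_{G_{(1)},G'_{(1)}}(\pi_{(1)})$. Applying the same computation to the defining identity $\Xi_{s'}(\theta_{G,G'}(\pi))=\tilde{\pi}_\#\otimes\theta_{G_{(1)},G'_{(1)}}(\pi_{(1)})$, and using that $\theta_{G_{(1)},G'_{(1)}}(\pi_{(1)})$ occurs in $\Theta_{G_{(1)},G'_{(1)}}(\pi_{(1)})$ by \cite{Epequin}, we see that $\theta_{G,G'}(\pi)$ is indeed a constituent of $\Theta_{G,G'}(\pi)$. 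By the minimality property of $\theta_{G_{(1)},G'_{(1)}}$ from \cite{Epequin}, $\mathcal{O}_{\theta_{G_{(1)},G'_{(1)}}(\pi_{(1)})}\preceq\mathcal{O}_{\pi'_{(1)}}$, hence in $C_{G'^*}(s')$ one has $\mathcal{O}_{\tilde{\pi}_\#}\times\mathcal{O}_{\theta_{G_{(1)},G'_{(1)}}(\pi_{(1)})}\preceq\mathcal{O}_{\tilde{\pi}_\#}\times\mathcal{O}_{\pi'_{(1)}}$, i.e. $\mathcal{O}_{\Xi_{s'}(\theta_{G,G'}(\pi))}\preceq\mathcal{O}_{\Xi_{s'}(\pi')}$. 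Applying the first assertion of the theorem to the pair $(G',G)$ (with the series $(s')$) then gives $\mathcal{O}_{\theta_{G,G'}(\pi)}\preceq\mathcal{O}_{\pi'}$. Since $\pi'$ was an arbitrary constituent, $\theta_{G,G'}(\pi)$ has the smallest unipotent support among the irreducible constituents of $\Theta_{G,G'}(\pi)$, and the symplectic--orthogonal case follows by the same argument, $C_{G'^*}(s')$ then carrying the additional fixed factors on which $\iota$ acts.
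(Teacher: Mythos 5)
Your approach is genuinely different from the paper's, and it contains a gap you yourself flag. The paper does not attempt any explicit combinatorial identification of a map $\mathfrak{d}_{(s)}$ at the level of symbols and $\beta$-sets. Instead it cites Geck--Malle (Propositions 4.1 and 4.5 of \cite{GM}), which give that the unipotent support of $\pi$ is the (generalized) Lusztig--Spaltenstein induction of the unipotent support of $\pi_u$ from $C_{G^*}(s)$ to $G$. The order-preservation then reduces to the monotonicity of that induction: in the Levi case it is derived directly from the closure formula $\overline{\Ind_L^G(\mathcal{O})}=\bigcup_{x\in G}x\overline{\mathcal{O}U}x^{-1}$ of \cite{Spaltenstein}, and in the general case it is cited as Remarque III.12.4.2 of \cite{Spaltenstein}. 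This sidesteps entirely the need to ``pin down, for each class $(s)$, precisely which defects and which assembly occur in $\Xi_s$'' that you correctly identify as the main obstacle in your route. In other words, the paper replaces your hard identification step by an appeal to the geometric characterization of the support under Lusztig's bijection, and your elementary $\beta$-set computation by Spaltenstein's general monotonicity statement.

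As written, your proof of the first assertion is therefore incomplete: the existence of $\mathfrak{d}_{(s)}$ (that is, that $\mathcal{O}_\pi$ depends only on $\mathcal{O}_{\pi_u}$) and the combinatorial description you propose for it are asserted but not established, and you explicitly defer them. To close the gap along your lines you would essentially need to re-derive the content of \cite[Prop.\ 4.1, 4.5]{GM} in symbol language and verify its compatibility with the twist $\pi_\#\rightsquigarrow\tilde\pi_\#$; the paper's citation of those results is the shorter road. On the other hand, your treatment of the ``in particular'' clause is sound and in fact more explicit than the paper's, which leaves that deduction to the reader: you correctly reduce it, via the commutative diagram, the product structure of the closure order on $C_{G'^*}(s')$, and the minimality property of $\theta_{G_{(1)},G'_{(1)}}$ from \cite{Epequin}, to the first assertion applied to $G'$ and the series $(s')$.
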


In a recent paper \cite{Howe-Gurevich}, Howe and Gurevich presented the notion of rank for representations of finite symplectic groups (see Section \ref{SupportRank}). This conduced to the introduction of the \emph{eta} correspondence. Consider a dual pair $(G,G')$ formed by one orthogonal and one symplectic group; and suppose the pair is in the stable range.  Howe and Gurevich show that for $\rho$ in $\mathscr{E}(G)$, there is a unique irreducible representation in $\Theta_{G,G'}(\rho)$ of maximal rank, it is denoted by $\eta(\rho)$.
 
 The correspondences $\theta$ and $\eta$ are defined in different ways. The former chooses a subrepresentation of $\Theta$ with smallest unipotent support, whereas the latter selects one with greatest rank. In \cite{Pan3}, Shu-Yen Pan shows that these two agree on their common domain of definition (the stable range), i.e. among the irreducible constituents of $\Theta(\pi)$, the representation with the smallest unipotent support is the one having the greatest rank. This points out to an inverse relation between these two features. We discuss this in Section \ref{SupportRank}. 
 

In \cite{GHJ}, Gerber, Hiss and jacon introduced the notion of \emph{weak cuspidality} for modular representations in non-defining (including zero) characteristic. The spirit of the definition is the same as for cuspidal representations: the vanishing of parabolic restriction functors. For weakly cuspidal representations we nonetheless restrict our attention to certain kind of parabolic subgroups, called \emph{pure} by the authors. This yields a weak Harish-Chandra theory that refines the usual one. Moreover, for representations of unitary groups in prime characteristic, this new definition provides a natural partition of the set of unipotent representations (see Proposition \ref{CoreWeaKHC}), in the same way usual series do for ordinary unipotent representations.

A natural question arises: how does the Howe correspondence behaves with respect to this weak Harish-Chandra theory? We provide an answer to this question for ordinary representations. We first prove that the Howe correspondence respects weak cuspidality in case of first occurrence. This is our second result.

\begin{thm*}
Let $(G_m,G'_{m'})$ be a type I dual pair, and $\pi$ be an irreducible weakly cuspidal representation of $G_m$, and let $m'(\pi)$ be its first occurrence index. 

1. If $m'<m'(\pi)$, then $\Theta_{m,m'}(\pi)$ is empty

2. The representation $\Theta_{m,m'(\pi)}(\pi)$ is irreducible and weakly cuspidal

3.  If $m'>m'(\pi)$, then none of the constituents of $\Theta_{m,m'}(\pi)$ is weakly cuspidal 
\end{thm*}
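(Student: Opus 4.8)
The plan is to reduce everything to a statement about the weak Harish-Chandra series of $\pi$ and then track that datum through the Howe correspondence. Recall that weak cuspidality is detected by the vanishing of Harish-Chandra restriction along \emph{pure} parabolic subgroups; in a classical group $G_m$ of rank-type data $m$, a pure parabolic has Levi factor of the form $\GL_{a_1}(q)\times\cdots\times\GL_{a_k}(q)\times G_{m-\sum a_i}(q)$, so weak cuspidality of $\pi$ means precisely that $\pi$ does not occur in any $\mathrm R_L^{G_m}(\sigma)$ with the $G$-factor of $L$ strictly smaller than $G_m$. Thus $\pi$ has a well-defined \emph{weak cuspidal support}: a genuinely cuspidal-for-the-weak-theory representation $\pi_0$ of a smaller group $G_{m_0}$, together with a cuspidal representation of a product of $\GL$'s, and $\pi$ is a constituent of the corresponding weak Harish-Chandra induction. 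The first thing I would do is recall (citing \cite{GHJ}, and Proposition~\ref{CoreWeaKHC} for the unitary case) that this weak series decomposition is available and that $\pi$ weakly cuspidal $\iff$ $m_0=m$.

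Second, I would invoke the compatibility of the Howe correspondence with Harish-Chandra induction from the $\GL$-part. This is the key structural input: there is a ``see-saw''/parabolic-induction formula (as in \cite{AMR}, and used in \cite{Epequin}) saying that $\Theta_{m,m'}$ intertwines Harish-Chandra induction $\mathrm R$ from a $\GL$-parahoric on the $G_m$-side with Harish-Chandra induction from the corresponding $\GL$-parahoric on the $G'_{m'}$-side, up to a shift of the rank parameters and a twist of the $\GL$-datum by the contragredient (this is exactly the $\iota\otimes\Theta_{G_{(1)},G'_{(1)}}$ shape appearing in the commutative diagram in the introduction). Consequently, the Howe correspondence carries weak Harish-Chandra series to weak Harish-Chandra series: if $\pi$ lies in the weak series of $(G_{m_0},\pi_0)$ with $\GL$-decoration $\delta$, then every constituent of $\Theta_{m,m'}(\pi)$ lies in the weak series attached to $(\Theta_{m_0,m'_0}(\pi_0),\text{(twisted }\delta))$ for the appropriate shifted orthogonal/symplectic rank $m'_0$ — provided $\Theta_{m_0,m'_0}(\pi_0)$ is nonzero, and in particular it is nonzero exactly when $m'_0\geq m'_0(\pi_0)$, i.e.\ when we are at or beyond first occurrence for the weakly cuspidal datum.

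With that in hand the three assertions fall out by a first-occurrence bookkeeping argument. Because $\pi$ is weakly cuspidal its weak-cuspidal support is $\pi$ itself ($m_0=m$, trivial $\GL$-part), so the shifted parameters collapse and one gets $m'_0=m'$, $\pi_0=\pi$; hence $\Theta_{m,m'}(\pi)=0$ precisely for $m'<m'(\pi)$, which is part~1. For part~2, at $m'=m'(\pi)$ first occurrence forces $\Theta_{m,m'(\pi)}(\pi)$ to be irreducible — here I would quote the known first-occurrence irreducibility results (as in \cite{AMR}/\cite{Pan3}) — and it is weakly cuspidal because, by the intertwining property, any constituent of a proper $\GL$-Harish-Chandra restriction of $\Theta_{m,m'(\pi)}(\pi)$ would produce, via the inverse correspondence, a constituent of a proper $\GL$-restriction of $\pi$ on a group that \emph{still has nonzero theta below the first occurrence index of $\pi$}, contradicting either weak cuspidality of $\pi$ or minimality of $m'(\pi)$. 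For part~3, when $m'>m'(\pi)$ one has $\Theta_{m,m'}(\pi)=\mathrm R^{G'_{m'}}_{\GL_{m'-m'(\pi)}\times G'_{m'(\pi)}}\big(\text{Steinberg-type twist}\otimes \Theta_{m,m'(\pi)}(\pi)\big)$ up to lower terms (this ``tower'' description of higher occurrences is standard, cf.\ \cite{AMR}), so every constituent is genuinely in the image of induction from a proper pure parabolic, hence not weakly cuspidal.

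The main obstacle I anticipate is making the intertwining-with-$\GL$-induction statement precise enough — in particular getting the \emph{exact} shift of rank parameters and the contragredient/sign twist on the $\GL$-datum right, and checking that the parabolics involved are \emph{pure} in the sense of \cite{GHJ} (not merely arbitrary parabolics), since weak cuspidality only tests pure ones. A secondary subtlety is part~2's weak cuspidality claim: one must rule out the possibility that $\Theta_{m,m'(\pi)}(\pi)$, though irreducible, is a constituent of $\GL$-Harish-Chandra induction on the $G'$-side without the inverse correspondence being defined/nonzero on that smaller $G'$; this is where the minimality of $m'(\pi)$ and the symmetry of first occurrence (the fact that $\pi\in\Theta_{m',m}(\Theta_{m,m'}(\pi))$ at first occurrence) must be used carefully.
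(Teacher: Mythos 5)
Your proposal takes the opposite logical direction from the paper, and in doing so runs into a circularity: you cite as a ``key structural input'' the compatibility of $\Theta$ with \emph{weak} Harish-Chandra series, and you reduce all three assertions to bookkeeping inside that statement. But in the paper that compatibility is a \emph{consequence} of Theorem~\ref{WeakFirstOccurrence}, not an ingredient in its proof. The analogy with Theorem~\ref{HoweHarish-Chandra} does not bail you out here, because that theorem's own proof is bootstrapped from the behaviour of $\Theta$ on (ordinary) cuspidal representations at first occurrence; the weak analogue of that bootstrapping \emph{is} the theorem you are being asked to prove. Similarly, your part~2 leans on ``known first-occurrence irreducibility results (as in \cite{AMR}/\cite{Pan3})'' and on your part~3 ``tower description'' — both are proved there for cuspidal representations, and the entire point of this theorem is that cuspidality is being replaced by the strictly weaker hypothesis of weak cuspidality. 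Nothing in your argument closes the gap between the two notions; you flag the purity of the relevant parabolics as a ``secondary subtlety,'' but in fact this is exactly where the old cuspidal argument breaks and new work is required.

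The paper's actual proof avoids this circularity by arguing directly from the Weil representation and splits into an existence step and a uniqueness step, both of which are independent of any weak-series compatibility. Existence: assume the $G_m$-partner $\pi$ of a weakly cuspidal $\pi'$ at first occurrence fails to be weakly cuspidal, so $\pi$ embeds in $R_{M_1}^{G_m}(\chi_1\otimes\varphi_1)$; Frobenius reciprocity and the explicit coinvariant decomposition of $(1\otimes\prescript{*}{}{R}_{M_1}^{G_m})(\omega_{m',m})$ in Proposition~\ref{coinv-submod} then give two terms, one killed by weak cuspidality of $\pi'$, the other forcing an occurrence at index $m(\pi')-1$ — contradiction. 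Uniqueness: bound $\pi_1\otimes\tilde\pi_2$ inside the $G'_{m'}$-coinvariants of $\omega$ via Lemma~\ref{KudlaCoinvariants}, restrict $\Ind_{P_m}^{G_m}(1)$ to $G_{m_1}\times G_{m_2}$ via Mackey, and — this is the crucial new input — compute the orbit representatives $V_iP_m$ and their stabilizers \emph{explicitly} (Proposition~\ref{OrbitStabilizer}). For cuspidal $\pi_i$ it sufficed to know the stabilizer sits in $P_{m_1-i}\times P_{m_2-i}$; for weakly cuspidal $\pi_i$ one needs to identify the resulting induction as $R^{G_{m_1}\times G_{m_2}}_{M_{m_1-i,i}\times M_{m_2-i,i}}(1\otimes1\otimes\mathbf R_{G_i})$, hence from a \emph{pure} Levi, and observe that this vanishes against $\pi_1\otimes\tilde\pi_2$ unless $m_1=m_2=i$. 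Your proposal has no analogue of this stabilizer computation, and without it you cannot verify that the parabolics you encounter are pure — the very point you yourself name as an open obstacle. So the gap you anticipated is real, and the missing idea is precisely the explicit orbit/stabilizer analysis on $G_m/P_m$.
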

 
 The proof of this result can divided in two independent parts: existence and uniqueness, each making use of different tools. The former relies on the computation of coinvariants of the Weil representation presented in Section \ref{HoweCorrespondanceCuspidalSupport}. The latter, on the study of orbits and stabilizers for the action of a block diagonal subgroup of a type I group on the set of maximal isotropic subspaces of the underlying module of this group. We need to mention that the proof is inspired by the equivalent result for cuspidal representations, but that it is more refined. Indeed, for cuspidal representations, the uniqueness proof used the fact that the stabilizers above are contained in a direct product of parabolic subgroups. For weakly cuspidal representations this is not enough, we had to calculate this stabilizers explicitly. In order to do so we had first to find explicit representatives for the orbits (which is also not needed in the cuspidal setting).
 
  From the last theorem above, the work required to establish the agreement between the Howe correspondence and weak cuspidal support is very much the same required in the cuspidal setting.

\section{Howe correspondance and cuspidal support}\label{HoweCorrespondanceCuspidalSupport}
Theorem 3.7 of \cite{AMR} states that the Howe correspondence is compatible with unipotent Harish-Chandra series. In this section, we generalize this theorem to arbitrary Harish-Chandra series. The proof follows that of Theorem 2.5 in \cite{Kudla}. 
 
We fix two Witt towers, $\mathscr{T}$ and $\mathscr{T}'$, such that $(G_m,G'_{m'})$ is a type I dual pair for any $G_m\in\mathscr{T}$ and $G'_{m'}\in\mathscr{T}'$. 

Let $D$ be a field equal to $\mathbb{F}_q$ when the dual pair is symplectic-orthogonal, and equal to $\mathbb{F}_{q^2}$ when the pair is unitary. Let $W_m$ be the underlying $D$-vector space of $G_m$. Let $P_k$ be the stabilizer  in $G_m$ of the totally isotropic subspace of $W_m$, spanned by the $k$ first vectors of a hermitian base, $k \leq m$. Denote by $N_k$ its unipotent radical,  $\GL_k=\GL(D)$ and $M_k=\GL_k\times G_{m-k}$ the standard Levi subgroup of $P_k$. Denote by $\GL'_{k'}$, $P'_{k'}$, $N'_{k'}$, and $M'_{k'}$ the analogous groups for $G'_{m'}$. 
Finally, denote by $\mathbf{R}_G$, the natural representation of the group $G\times G$ on the space $\mathscr{S}(G)$. This representation is isomorphic to the one obtained by inducing the trivial representation of $G$ to $G\times G$ (diagonal inclusion). It decomposes as 
$$
\mathbf{R}_G = \sum_{\pi\in\mathscr{E}(G)} \pi\otimes \tilde{\pi},
$$
where $\tilde{\pi}$ denotes the contragredient representation of $\pi$. 

\begin{prop}\label{coinv-submod}\cite[Corollary 1]{Epequin2} 
Let $\prescript{*}{}{R}_k\otimes 1$ and $1\otimes\prescript{*}{}{R}'_{k'}$  be the parabolic restriction functor from $G_m\times G'_{m'}$ to $M_k\times G'_{m'}$ and $G_m\times M'_{k'}$ respectively.

\emph{a)} The representation $(\prescript{*}{}{R}_k\otimes 1)(\omega_{m,m'})$ decomposes as :
$$
\bigoplus_{i=0}^{\min\{k,m'\}} R_{\GL_{k-i}\times \GL_i \times G_{m-k}\times M'_i}^{M_k\times G'_{m'}} 1_{\GL_{k-i}}\otimes \mathbf{R}_{\GL_{i}}\otimes\omega_{m-k,m'-i}.
$$
 \emph{b)} Likewise, the representation $(1\otimes\prescript{*}{}{R}'_{k'})(\omega_{m,m'})$ decomposes as :
$$
\bigoplus_{i=0}^{\min\{k',m\}} R_{M_i\times \GL_{k'-i}\times\GL_i\times G'_{m'-k'}}^{G_m\times M'_{k'}} 1_{\GL_{k'-i}}\otimes\mathbf{R}_{\GL_{i}}\otimes\omega_{m-i,m'-k'}.
$$ 
\end{prop}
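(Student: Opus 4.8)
The plan is to follow the classical computation of Jacquet modules of Weil representations, as in Theorem~2.5 of \cite{Kudla} and its unipotent refinement Theorem~3.7 of \cite{AMR}, by choosing a model of $\omega_{m,m'}$ adapted to the maximal parabolic $P_k$ and then analysing the action of $N_k$ explicitly. First I would set up the adapted model. Write $W_m=X_k\oplus W_{m-k}\oplus X_k^{*}$, where $X_k$ is the totally isotropic subspace fixed by $P_k$, $X_k^{*}$ a dual isotropic subspace, and $W_{m-k}=(X_k\oplus X_k^{*})^{\perp}$ the $D$-space of $G_{m-k}$. Tensoring with $W'_{m'}$ and using that $X_k\otimes W'_{m'}$ is totally isotropic in the symplectic space $W_m\otimes W'_{m'}$, one decomposes $W_m\otimes W'_{m'}$ into an orthogonal sum (for the symplectic form) of $(X_k\oplus X_k^{*})\otimes W'_{m'}$, which carries $X_k\otimes W'_{m'}$ as a Lagrangian, and $W_{m-k}\otimes W'_{m'}$. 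This yields an isomorphism of $P_k\times G'_{m'}$-modules
$$\omega_{m,m'}\;\cong\;\mathscr{S}(X_k\otimes W'_{m'})\otimes\omega_{m-k,m'},$$
in which $\GL_k=\GL(X_k)$ acts on the Schwartz factor through its natural (determinant-twisted) action, $G_{m-k}$ acts on $\omega_{m-k,m'}$, $G'_{m'}$ acts diagonally on both, and $N_k$ acts via its Heisenberg-type action on $\mathscr{S}(X_k\otimes W'_{m'})$ together with the abelian mixing coming from $\Hom(W_{m-k},X_k)$. Since we work with Gérardin's $\omega^{\flat}$, no metaplectic cocycle intervenes.

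Next I would compute the $N_k$-coinvariants, which is exactly $\prescript{*}{}{R}_k\otimes 1$. Stratify $X_k\otimes W'_{m'}$ by the rank $i$ of $v$ regarded as a linear map $X_k^{*}\to W'_{m'}$; this induces an $N_k$-stable filtration of $\mathscr{S}(X_k\otimes W'_{m'})$ whose graded pieces are Schwartz spaces on the rank-$i$ strata. Coinvariants under the central (Heisenberg) part of $N_k$ annihilate every stratum on which the associated character is nontrivial, forcing $\operatorname{im}(v)\subseteq W'_{m'}$ to be totally isotropic; so only $0\le i\le\min\{k,m'\}$ survive, and terms with $i$ larger than the Witt index of $W'_{m'}$ vanish (consistently with $\omega_{m-k,m'-i}$ being read in the Witt tower $\mathscr{T}'$). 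On the rank-$i$ stratum, $M_k\times G'_{m'}$ acts with one relevant orbit, and I would identify the point stabilizer as $\GL_{k-i}\times\GL_i\times G_{m-k}\times M'_i$ with $M'_i=\GL_i\times G'_{m'-i}$: one $\GL_i$ records a frame of the source line of $v$ inside a parabolic of $\GL(X_k)$ with complementary block $\GL_{k-i}$, the other is the $\GL_i$ of $M'_i$ coming from the isotropic subspace $\operatorname{im}(v)\subseteq W'_{m'}$. Computing the residual action on the fibre, the stabilizer acts by $1_{\GL_{k-i}}\otimes\mathbf{R}_{\GL_i}\otimes\omega_{m-k,m'-i}$, the first $\GL_i$ acting on $\mathscr{S}(\GL_i)=\mathbf{R}_{\GL_i}$ by left translation, the second by right translation, and $G_{m-k}\times G'_{m'-i}$ through the smaller Weil representation $\omega_{m-k,m'-i}$ which emerges once $\operatorname{im}(v)$ is split off from $W'_{m'}$. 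Frobenius reciprocity then turns each graded piece into
$$R_{\GL_{k-i}\times\GL_i\times G_{m-k}\times M'_i}^{M_k\times G'_{m'}}\;1_{\GL_{k-i}}\otimes\mathbf{R}_{\GL_i}\otimes\omega_{m-k,m'-i},$$
and summing over $i$ gives a). Part b) follows by the symmetric argument, exchanging the two members of the pair (equivalently, apply a) to $(G'_{m'},G_m)$ using $\omega_{m,m'}\cong\omega_{m',m}$). An alternative to the one-shot argument is to establish the $k=1$ case and iterate using transitivity of the Jacquet functor.

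The main obstacle is the orbit–stabilizer analysis: pinning down explicit representatives on the rank-$i$ stratum, verifying that the stabilizer is precisely $\GL_{k-i}\times\GL_i\times G_{m-k}\times M'_i$, and checking that its action on the fibre is exactly $1_{\GL_{k-i}}\otimes\mathbf{R}_{\GL_i}\otimes\omega_{m-k,m'-i}$ on the nose — in particular obtaining the \emph{untwisted} regular representation $\mathbf{R}_{\GL_i}$ rather than a determinant twist of it, and doing so uniformly across the unitary and symplectic–orthogonal cases and across the degenerate situations near the bottom of the Witt tower. The adapted model and the final Mackey assembly are routine once this bookkeeping is carried out.
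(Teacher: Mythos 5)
The paper itself does not prove this proposition; it is imported from \cite[Corollary 1]{Epequin2}, and the surrounding text credits the method to Theorem 2.5 of \cite{Kudla}. Your plan --- split $W_m=X_k\oplus W_{m-k}\oplus X_k^*$, take the Schr\"odinger model on the Lagrangian $X_k\otimes W'_{m'}$ to get $\omega_{m,m'}\cong\mathscr{S}(X_k\otimes W'_{m'})\otimes\omega_{m-k,m'}$ as a $P_k\times G'_{m'}$-module, stratify by rank, pass to $N_k$-coinvariants, then apply Mackey theory --- is precisely the Rallis--Kudla filtration argument, so you are following essentially the same route as the cited source.

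One point to tighten, which you flag yourself as ``the main obstacle'' but then mis-state: the $(M_k\times G'_{m'})$-stabilizer of a rank-$i$ point $v$ with totally isotropic image is \emph{not} the Levi $\GL_{k-i}\times\GL_i\times G_{m-k}\times M'_i$. It is a subgroup of the pair of parabolics determined by $(\ker v,\,\mathrm{im}\,v)$ in which the two $\GL_i$ blocks (one acting on $X_k^*/\ker v$, the other on $\mathrm{im}\,v$) are identified \emph{diagonally} via $v$, together with their unipotent radicals. Consequently $\mathbf{R}_{\GL_i}$ is not ``the action on the fibre'' of that Levi: it appears because inducing the trivial character off the diagonal $\GL_i$ up to $\GL_i\times\GL_i$ yields $\mathbf{R}_{\GL_i}$, and this is how $\Ind$ from the genuine stabilizer gets rewritten as $R^{M_k\times G'_{m'}}_{\GL_{k-i}\times\GL_i\times G_{m-k}\times M'_i}\bigl(1_{\GL_{k-i}}\otimes\mathbf{R}_{\GL_i}\otimes\omega_{m-k,m'-i}\bigr)$. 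You should also make explicit how the $N_k/Z$-coinvariants (the abelian mixing $\Hom(W_{m-k},X_k)$) trade $\omega_{m-k,m'}$ for $\omega_{m-k,m'-i}$ once $\mathrm{im}\,v$ is split off $W'_{m'}$; at present your write-up only uses the centre $Z$ to cut down to totally isotropic image. With those two steps filled in, a) is complete, and b) follows by the symmetry $\omega_{m,m'}\cong\omega_{m',m}$ as you note.
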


Let $G_m$ be a type I group in the Witt tower $\mathscr{T}$. The set of standard Levi subgroups of $G_m$ can be parametrized by sequences $\mathbf{t}=(t_1,\ldots,t_r)$, such that $|\mathbf{t}|= \sum_{i=1}^r t_i$ is not greater than $m$. The corresponding Levi subgroup is equal to $\GL_{t_1}\times \cdots \times \GL_{t_r} \times G_{m-|\mathbf{t}|}$. 

For this Levi subgroup, we denote parabolic induction by  $R_{\mathbf{t}}$ and parabolic restriction by $\prescript{*}{}{R_{\mathbf{t}}}$. The Harish-Chandra series corresponding to the representation $\sigma_1\otimes\cdots\otimes\sigma_r\otimes\varphi$ of this Levi is denoted by $\mathscr{E}(G_m,\boldsymbol{\sigma}\otimes\varphi)_\mathbf{t}$, where $\boldsymbol{\sigma}=\sigma_1\otimes\cdots\otimes\sigma_r$. Finally, for a cuspidal representation $\pi$ of $G_m$, we denote by $\pi'$ (resp. $m'(\pi)$) its first occurrence (resp. first occurrence index) \cite{AM}. 

Proposition \ref{coinv-submod} is a key result in the proof of the following. 
 
\begin{thm}\label{HoweHarish-Chandra}\cite[Theorem 3]{Epequin2}
The image of $\mathscr{E}(G_m,\boldsymbol{\sigma}\otimes\varphi)_\mathbf{t}$ by the correspondence $\Theta_{m,m'}$ is spanned by a single series $\mathscr{E}(G'_{m'},\boldsymbol{\sigma}'\otimes\varphi')_{\mathbf{t}'}$ whenever $m'\geq m'(\varphi)$ and is zero otherwise. In the first case, if $|\mathbf{t}'|\geq |\mathbf{t}|$ then $\boldsymbol{\sigma}'=\boldsymbol{\sigma}\otimes 1$ and $\mathbf{t}'=\mathbf{t}\cup 1^d$; if $|\mathbf{t}'| < |\mathbf{t}|$ then $\boldsymbol{\sigma}=\boldsymbol{\sigma}'\otimes 1$ and $\mathbf{t}=\mathbf{t}'\cup 1^d$, where $d=\vert\vert\mathbf{t}'\vert-\lvert\mathbf{t}\vert\rvert$.
\end{thm}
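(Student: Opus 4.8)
The plan is to mimic the proof of Theorem 2.5 in \cite{Kudla} and Theorem 3.7 of \cite{AMR}, using Proposition \ref{coinv-submod} as the engine for an induction on $|\mathbf{t}|$. The base case is $\mathbf{t}=\varnothing$, i.e.\ the cuspidal case: here $\mathscr{E}(G_m,\boldsymbol{\sigma}\otimes\varphi)_\varnothing$ is the cuspidal class of $\varphi$ itself, and the statement $\Theta_{m,m'}(\varphi)$ is spanned by $\mathscr{E}(G'_{m'},\varphi')_{1^{d}}$ with $d=m'(\varphi)$-dependent shift is precisely the known first-occurrence behaviour for cuspidal representations recorded just before the statement (cf.\ \cite{AM}, \cite{AMR}): $\Theta_{m,m'}(\varphi)=0$ for $m'<m'(\varphi)$, $\Theta_{m,m'(\varphi)}(\varphi)=\varphi'$ is irreducible cuspidal, and for $m'>m'(\varphi)$ the constituents lie in a single series $\mathscr{E}(G'_{m'},\varphi'\otimes 1)_{1^{m'-m'(\varphi)}}$ (or the dual shift on the other side). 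I would state this cleanly as the starting point.

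For the inductive step, write $\mathbf{t}=(t_1,\dots,t_r)$ with $k=t_r$ (say), so that $\mathscr{E}(G_m,\boldsymbol{\sigma}\otimes\varphi)_\mathbf{t}$ sits inside $R_k^{G_m}(\sigma_r\otimes -)$ applied to the series $\mathscr{E}(G_{m-k},\boldsymbol{\sigma}^-\otimes\varphi)_{\mathbf{t}^-}$ for the shorter sequence $\mathbf{t}^-=(t_1,\dots,t_{r-1})$. The key computation is to push $\Theta_{m,m'}$ through the parabolic induction $R_k$. Using Frobenius reciprocity, a representation $\pi'$ of $G'_{m'}$ meets $\Theta_{m,m'}\big(R_k^{G_m}(\sigma_r\otimes\pi_0)\big)$ iff $\omega_{m,m'}$ has $\sigma_r\otimes\pi_0\otimes\pi'$ (suitably contragredient-twisted) as a quotient after restricting along $M_k\times G'_{m'}\hookrightarrow G_m\times G'_{m'}$, i.e.\ iff $\langle (\prescript{*}{}{R}_k\otimes 1)(\omega_{m,m'}),\ \sigma_r\otimes\pi_0\otimes\tilde\pi'\rangle\neq 0$. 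Now substitute part a) of Proposition \ref{coinv-submod}: the $i$-th summand is $R_{\GL_{k-i}\times\GL_i\times G_{m-k}\times M'_i}^{M_k\times G'_{m'}}\big(1_{\GL_{k-i}}\otimes\mathbf{R}_{\GL_i}\otimes\omega_{m-k,m'-i}\big)$. Since $\mathbf{R}_{\GL_i}=\bigoplus_{\tau\in\mathscr{E}(\GL_i)}\tau\otimes\tilde\tau$, the presence of $\sigma_r$ on the $\GL$-factor forces $\sigma_r$ to be a constituent of $R_{\GL_{k-i}\times\GL_i}^{\GL_k}(1\otimes\tau)$ for some $\tau\in\mathscr{E}(\GL_i)$; because $\mathbf{1}_{\GL_1}$ is cuspidal and Harish-Chandra series of $\GL$ are governed by cuspidal support, this pins down $i$ and $\tau$ up to the combinatorics of adding a string of $1$'s, and reduces the condition on $\pi'$ to: $\tilde\pi'$ appears in $\Theta_{m-k,m'-i}(\pi_0)$ parabolically induced up by $\GL_{i}\times\GL_{k-i}$, with $\pi_0$ ranging over $\mathscr{E}(G_{m-k},\boldsymbol{\sigma}^-\otimes\varphi)_{\mathbf{t}^-}$. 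At that point the inductive hypothesis identifies $\Theta_{m-k,m'-i}\big(\mathscr{E}(G_{m-k},\boldsymbol{\sigma}^-\otimes\varphi)_{\mathbf{t}^-}\big)$ with a single series $\mathscr{E}(G'_{m'-i},(\boldsymbol{\sigma}^-)'\otimes\varphi')_{(\mathbf{t}^-)'}$, and re-inducing along the $\GL$-factors yields a single series for $\Theta_{m,m'}$, with the sequence $\mathbf{t}'$ obtained from $(\mathbf{t}^-)'$ by re-attaching $\GL_{t_r}$ and the extra $1$'s. Tracking the index bookkeeping through $m'(\varphi)$ gives exactly the stated $\mathbf{t}'=\mathbf{t}\cup 1^d$ (or the dual) with $d=\big||\mathbf{t}'|-|\mathbf{t}|\big|$, and the vanishing for $m'<m'(\varphi)$ is inherited verbatim from the base case since $\omega_{m-k,m'-i}$ vanishes on the relevant cuspidal datum once $m'-i<m'(\varphi)$ for all admissible $i$; the symmetric computation using part b) of Proposition \ref{coinv-submod} handles the case $|\mathbf{t}'|<|\mathbf{t}|$, where the string of $1$'s is removed rather than added.

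The main obstacle I anticipate is \emph{not} the mechanical bookkeeping but showing that the image is genuinely spanned by a \emph{single} series rather than a union of several: one must argue that for the various admissible $i$ in Proposition \ref{coinv-submod}, together with the various $\tau\in\mathscr{E}(\GL_i)$ and the various shifts of $1$'s, all the resulting $G'_{m'}$-series coincide. This is where one leans on the rigidity of $\GL$-theory — the fact that $R_{\GL_a\times\GL_b}^{\GL_{a+b}}(1\otimes\tau)$ and its cuspidal support behave multiplicatively, so that adding $1_{\GL_1}$'s only lengthens the sequence $\mathbf{t}$ by $1$'s and never changes the underlying cuspidal datum $\varphi'$ on the $G'$-factor — combined with the uniqueness already built into the inductive hypothesis. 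A secondary subtlety is the compatibility between the two shift regimes ($|\mathbf{t}'|\geq|\mathbf{t}|$ versus $|\mathbf{t}'|<|\mathbf{t}|$): one should check that exactly one of them occurs, which follows because $m'(\varphi)$ determines on which side of the Witt-tower seesaw the surplus $1$'s must be placed, and that the two descriptions agree on the overlap $|\mathbf{t}'|=|\mathbf{t}|$ (then $d=0$, $\boldsymbol{\sigma}'=\boldsymbol{\sigma}$, $\mathbf{t}'=\mathbf{t}$). I would isolate that dichotomy as a short lemma about $m'(\varphi)$ before running the induction, so that the inductive step only has to propagate it.
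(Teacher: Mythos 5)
Your proposal matches the paper's indicated approach: the paper explicitly says the proof follows that of Theorem 2.5 in \cite{Kudla}, with Proposition \ref{coinv-submod} (the coinvariant decomposition) as the key ingredient, and you have correctly reconstructed that strategy — induction on $|\mathbf{t}|$ with the cuspidal first-occurrence theorem as base case, pushing $\Theta$ through one stage of parabolic induction via Frobenius reciprocity and the two decompositions in Proposition \ref{coinv-submod}, and tracking the $1^d$-shift via the $\GL$-cuspidal-support bookkeeping. You also correctly flag the genuine subtlety (why a single series emerges rather than a union) and point to the right resolution, namely the rigidity of Harish-Chandra series for $\GL$ combined with the uniqueness already in the inductive hypothesis.
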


In \cite{Epequin2} we showed how this result implies Theorem 3.7 in \cite{AMR}. The latter basically says that the Howe correspondence preserves unipotent Harish-Chandra series. We also used Theorem \ref{HoweHarish-Chandra} to obtain the bijective correspondence $\theta$ we present in Section \ref{SectionExtremalRepresentations}. However, this was not necesary, we can define $\theta$ using only the Lusztig correspondence (as done below).

\section{Howe correspondence and Lusztig correspondence}\label{HoweLusztigCorrespondence}
The purpose of this section is to see the effect of the Lusztig correspondence on the Howe correspondence for type I dual pairs.

\subsection{Centralizers of rational semisimple elements}\label{CentralizersSection}
Let $\mathbf{G}$ be a reductive group defined over $\mathbb{F}_q$, and  $C_\textbf{G}(x)$ be the centralizer of a rational element $x$ in $\textbf{G}$. Denote by $G$ and $C_G(x)$ their groups of rational elements. 

Assume that $\mathbf{G}$ is also connected. In Proposition 5.1 of \cite{Lusztig}, Lusztig found a bijection
\begin{align}\label{LusztigBijection}
 \mathfrak{L}_s : \mathscr{E}(G,(s)) \simeq \mathscr{E}(C_{G^*}(s),(1)).
\end{align}
where $s$ is a rational semisimple element of $\textbf{G}^*$. Aubert, Michel and Rouquier extended this bijection to even orthogonal groups (Proposition 1.7 of \cite{AMR}). It is known as the \emph{Lusztig bijection} or \emph{Lusztig correspondence}. Taking $s=1$ yields a bijection between the series of unipotent representations of $G$ with that of its dual :
\begin{align}\label{UnipotentLusztigBijection}
\mathfrak{L}_1 : \mathscr{E}(G,1) \simeq \mathscr{E}(G^*,1),
\end{align}
We can also extend (\ref{LusztigBijection}) by linearity in order to obtain an isometry between the categories $\mathscr{R}(G,(s))$ and $\mathscr{R}(C_{G^*}(s),(1))$ spanned by the Lusztig series $\mathscr{E}(G,(s))$ and $\mathscr{E}(C_{G^*}(s),(1))$ respectively. 

Following Section 1.B in \cite{AMR}, let $\mathbf{G}$ be a classical group of rank $n$, and $\mathbf{T}_n=\mathbb{F}_q^n$. Let $s$ be a rational semisimple element of its Langlands dual $\mathbf{G^*}$. By definition, a rational semisimple element is conjugate to an element $(\lambda_1,\ldots,\lambda_n)$ in $\mathbf{T}_n$. Let $\nu_\lambda(s)$ the number of times $\lambda$ appears in this list. There is a decomposition
$$
C_\mathbf{G^*}(s)=\prod\mathbf{G}_{[\lambda]}(s),
$$
where $[\lambda]$ is the orbit of $\lambda$ by the action of the Frobenius endomorphism, intersected with $\{\lambda_1,\ldots,\lambda_n\}$. Each $\mathbf{G}_{[\lambda]}(s)$ is a reductive quasi-simple group of rank $|[\lambda]|\nu_\lambda(s)$. Moreover, if $\lambda\neq \pm 1$, then $\mathbf{G}_{[\lambda]}(s)$ is a unitary or general linear group (possibly over some finite extension of $\mathbb{F}_q$). Additionally :

\noindent (1) If $\mathbf{G}=\boldsymbol{\GL}_n$ is unitary, then $\mathbf{G}_{[\pm 1]}(s)$ is a unitary group.

\noindent (2) If $\mathbf{G}=\boldsymbol{\SO}_{2n+1}$, then $\mathbf{G}_{[-1]}(s)=\boldsymbol{\Or}_{2\nu_{-1}(s)}$, and $\mathbf{G}_{[1]}(s)=\boldsymbol{\SO}_{2\nu_1(s)+1}$.

\noindent (3) If $\mathbf{G}=\boldsymbol{\Or}_{2n}$, then $\mathbf{G}_{[\pm 1]}(s)=\boldsymbol{\Or}_{2\nu_{\pm 1}(s)}$.

In all cases we see that $\mathbf{G}_{[1]}(s)$ is a group of the same kind as $\mathbf{G}$, but of smaller rank. 

\subsection{Weil representation and Lusztig correspondence}
Consider a type I dual pair $(G,G')$. Let $m$ (resp. $m'$) be the Witt index of $G$ (resp. $G'$). According to Proposition 2.3 in \cite{AMR}, if $s$ is a rational semisimple element in $\mathbf{G}^*$, then there is a rational semisimple element $s'$ in $\mathbf{G'}^*$, such that the Howe correspondence relates $\mathscr{E}(G,(s))$ and $\mathscr{E}(G',(s'))$. Moreover, in this case, $s'=(s,1)$ if $m\leq m'$, and $s=(s',1)$ otherwise. In particular, there is some $l\leq\min(m,m')$, and $t$ in $\mathbf{T}_l$ with eigenvalues different from $1$, such that $s=(t,1)$, and $s'=(t,1)$. Let $\omega_{G,G',t}$ denote the projection of the Weil representation $\omega_{G,G'}$ onto $\mathscr{R}(G,(s))\otimes\mathscr{R}(G',(s'))$, and $\mathbf{T}_{l,0}$ the subset of $\mathbf{T}_l$ whose elements have all their eigenvalues different from $1$. Proposition 2.4 in \cite{AMR} asserts that
$$
\omega_{G,G'}=\bigoplus_{l=0}^{\min(m,m')}\bigoplus_{t\in\mathbf{T}_{l,0}}\omega_{G,G',t}.
$$
We now endeavour to study the effect of the Lusztig correspondence on the Weil representation $\omega_{G,G',t}$. We treat unitary and symplectic-orthogonal groups independently.

\subsubsection{Unitary pairs}
Suppose that $\mathbf{G}$ is a unitary group. Let $s$ be a rational semisimple element in $\mathbf{G}^*$, let $\mathbf{G}_\#$ denote the product of $\mathbf{G}_{[\lambda]}(s)$ for $\lambda\neq 1$, and $\mathbf{G}_{(1)}$ be the Langlands dual of $\mathbf{G}_{[1]}(s)$. The groups of rational elements of $\mathbf{G}_\#$ and $\mathbf{G}_{(1)}$ will be denoted by $G_\#$ and $G_{(1)}$ respectively. Considering the decomposition of centralizers discussed above, and since by (\ref{UnipotentLusztigBijection}) the unipotent Lusztig series of $\mathbf{G}_{(1)}$ and $\mathbf{G}_{[1]}$ can be identified, we obtain a modified Lusztig bijection 

\begin{align}\label{UnitaryPanBijection}
\Xi_s : \mathscr{E}(G,(s))\simeq \mathscr{E}(G_\#,1)\times \mathscr{E}(G_{(1)},1).
\end{align}



For $\pi$ in $\mathscr{E}(G,(s))$ we will denote by $\pi_\#$ and $\pi_{(1)}$ the (unipotent) representations of $G_\#$ and $G_{(1)}$ such that
$$
\Xi_s(\pi)=\pi_\#\otimes \pi_{(1)}
$$ 
Let $(G,G')$ be a unitary dual pair, and $s$ (resp. $s'$) be a semisimple element of $G^*$ (resp. $G^{'*}$). 
\begin{prop}\label{UnitaryPanGroups}
The groups $G_\#$ and $G'_\#$ are isomorphic. Moreover, the pair $(G_{(1)},G'_{(1)})$ can be identified with a unitary dual pair.
\end{prop}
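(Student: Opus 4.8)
The plan is to unwind the definitions of $G_\#$, $G_{(1)}$ and their primed analogues through the decomposition of centralizers recalled in Section \ref{CentralizersSection}, using as the key input --- recorded just before the statement, from Proposition 2.3 in \cite{AMR} --- that $s=(t,1)$ and $s'=(t,1)$ for one and the same $t\in\mathbf{T}_{l,0}$. Since every eigenvalue of $t$ differs from $1$, this immediately gives $\nu_\lambda(s)=\nu_\lambda(t)=\nu_\lambda(s')$ for every $\lambda\neq 1$, while $\nu_1(s)=m-l$ and $\nu_1(s')=m'-l$. Both assertions then reduce to bookkeeping: the ``$\lambda\neq 1$'' part of the centralizer sees only $t$, and the ``$\lambda=1$'' part amounts to passing to a smaller member of the same Witt tower.

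For the first assertion I would argue that each quasi-simple factor $\mathbf{G}_{[\lambda]}(s)$ with $\lambda\neq 1$, \emph{together with its rational structure}, is determined by the Frobenius orbit $[\lambda]$ and the multiplicity $\nu_\lambda(t)$ alone, independently of the ambient unitary group: for $\lambda\neq\pm 1$ it is a unitary group or a general linear group over a finite extension of $\mathbb{F}_q$, and which of the two occurs --- as well as the field of definition --- depends only on how the orbit $[\lambda]$ sits relative to its image under the duality involution of $\mathbf{T}_l$; for $\lambda=-1$ it is, by item (1) of Section \ref{CentralizersSection}, a unitary group of rank $\nu_{-1}(t)$. Matching factors one orbit at a time then yields an isomorphism $\mathbf{G}_\#\simeq\mathbf{G'}_\#$ compatible with the respective Frobenius endomorphisms, hence $G_\#\simeq G'_\#$.

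For the second assertion, item (1) of Section \ref{CentralizersSection} tells us that $\mathbf{G}_{[1]}(s)$ is a unitary group of rank $\nu_1(s)=m-l$ which is of the same kind as $\mathbf{G}$; that is, its group of rational points is the member of the Witt tower $\mathscr{T}$ of Witt index $m-l$. Since the Langlands dual of a finite unitary group is again a unitary group of the same rank and kind, $G_{(1)}$ is exactly that member of $\mathscr{T}$, and symmetrically $G'_{(1)}$ is the member of $\mathscr{T}'$ of Witt index $m'-l$. By our standing choice of the two Witt towers, any member of $\mathscr{T}$ together with any member of $\mathscr{T}'$ forms a unitary dual pair, so $(G_{(1)},G'_{(1)})$ is one.

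The only point requiring real care is the propagation of rational structures. One must verify that, for $\lambda\neq 1$, the group $\mathbf{G}_{[\lambda]}(s)$ is literally the same $\mathbb{F}_q$-group whether built inside $\mathbf{G}^*$ or inside $\mathbf{G'}^*$ --- including whether it is linear or unitary and over which extension --- and that neither $\mathbf{G}_{[1]}(s)$ nor its Langlands dual leaves the Witt tower of $\mathbf{G}$, so that $(G_{(1)},G'_{(1)})$ is an honest dual pair and not merely an abstract pair of unitary groups. Both are immediate consequences of the structure theory recalled in Section \ref{CentralizersSection} once it is spelled out carefully; I do not expect any substantive obstacle beyond this bookkeeping.
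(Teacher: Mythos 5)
Your proposal is correct and follows essentially the same route as the paper: both argue the first assertion from the fact that $s$ and $s'$ share the same eigenvalues $\neq 1$ with the same multiplicities (which you make explicit via $s=(t,1)=s'$), and both derive the second assertion from item (1) of Section \ref{CentralizersSection}, i.e.\ that $\mathbf{G}_{[1]}(s)$ and $\mathbf{G}'_{[1]}(s')$ are unitary. You simply spell out the bookkeeping on Frobenius orbits and Witt towers that the paper leaves implicit.
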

\begin{proof}
Both assertions in the statement above follow from the explicit decomposition of centralizers given in the Section \ref{CentralizersSection}. The isomorphism between  $G_\#$ and $G'_\#$ is a consequence of the fact that $s$ and $s'$ have the same eigenvalues different from 1 (with same multiplicities). Concerning the second assertion, it suffices to state that $G_{[1]}$ and $G'_{[1]}$ are unitary groups.
\end{proof}

Finally, the Weil representation $\omega_{G,G',t}$ can be described in terms of a correspondence between unipotent characters, defined either by $\mathbf{R}_{G_\#,1}$, or by the unipotent projection of the Weil representation of the smaller unitary dual pair $(G_{(1)},G'_{(1)})$.

\begin{thm}\label{UnitaryReductionUnipotentCase}\cite[Th\'eor\`eme 2.6]{AMR}
Let $t$ belong to $\mathbf{T}_{l,0}$. For a linear or unitary pair $(G,G')$, the representation $\omega_{G,G',t}$ is the image by the Lusztig correspondance
$$
\mathscr{E}(G\times G',(s\times s')) \simeq \mathscr{E}(C_{G^*}(s)\times C_{G^{'*}}(s'),1),
$$
of the representation
$$
\mathbf{R}_{G_\#,1}\otimes \omega_{G_{(1)},G'_{(1)},1}.
$$
\end{thm}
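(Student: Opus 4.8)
The plan is to reduce the assertion to the already-established Harish-Chandra compatibility of the Howe correspondence (Theorem \ref{HoweHarish-Chandra}), transported through the Lusztig correspondence. The key structural input is that, by Proposition 2.3 in \cite{AMR}, the Howe correspondence for the pair $(G,G')$ restricts to a correspondence between the Lusztig series $\mathscr{E}(G,(s))$ and $\mathscr{E}(G',(s'))$ with $s=(t,1)$ and $s'=(t,1)$ for $t\in\mathbf{T}_{l,0}$; hence $\omega_{G,G',t}$ is supported, after applying $\mathfrak{L}_s\otimes\mathfrak{L}_{s'}$, on $\mathscr{E}(C_{G^*}(s),1)\otimes\mathscr{E}(C_{G^{'*}}(s'),1)$. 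By Proposition \ref{UnitaryPanGroups} we may write $C_{G^*}(s)\simeq G_\#\times G_{[1]}$ and $C_{G^{'*}}(s')\simeq G'_\#\times G'_{[1]}$ with $G_\#\simeq G'_\#$ and $(G_{(1)},G'_{(1)})$ a genuine unitary dual pair of strictly smaller Witt index $l$. So what must be shown is that the transported representation is precisely $\mathbf{R}_{G_\#,1}\otimes\omega_{G_{(1)},G'_{(1)},1}$, i.e. that it acts as the identity-matching on the $G_\#$-factor and as the small Weil representation on the $G_{(1)}$-factor.

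First I would pin down the $G_\#$-factor. The representation $\mathbf{R}_{G_\#,1}=\sum_{\rho\in\mathscr{E}(G_\#,1)}\rho\otimes\tilde\rho$ encodes the statement ``$\Theta$ identifies $\pi_\#$ with $\pi'_\#$''; under the isomorphism $G_\#\simeq G'_\#$ of Proposition \ref{UnitaryPanGroups}, this is exactly the content of Theorem \ref{HoweHarish-Chandra} read at the level of the linear/unitary $\GL$-type factors: the cuspidal support on the $\GL$-blocks indexed by eigenvalues $\lambda\neq 1$ is rigidly preserved (no ``$\cup 1^d$'' modification occurs on those blocks, since the extra $1$'s produced by Howe all land in the $G_{[1]}$-type factor). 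Concretely, I would take a Harish-Chandra series $\mathscr{E}(G_m,\boldsymbol\sigma\otimes\varphi)_{\mathbf t}$ with cuspidal datum whose semisimple part is $s$, apply Theorem \ref{HoweHarish-Chandra}, and observe that the resulting series $\mathscr{E}(G'_{m'},\boldsymbol\sigma'\otimes\varphi')_{\mathbf t'}$ has $\boldsymbol\sigma'$ differing from $\boldsymbol\sigma$ only by trivial characters of $\GL_1$'s, which after the Lusztig correspondence correspond to the $\varphi$-block; on the remaining blocks the datum is unchanged, giving the $\rho\mapsto\rho'$ identity matching.

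Second I would identify the $G_{(1)}$-factor with $\omega_{G_{(1)},G'_{(1)},1}$. Here the point is that the constituent $\varphi$ of the cuspidal datum, which carries the semisimple part, first occurs at index $m'(\varphi)$, and Theorem \ref{HoweHarish-Chandra} tells us the Howe correspondence on this sub-datum behaves exactly like the Howe correspondence of the smaller pair $(G_{(1)},G'_{(1)})$ — indeed this is the inductive mechanism already used in \cite{AMR} and \cite{Epequin2}. So I would argue that the restriction of $\omega_{G,G',t}$ to the $\varphi$-component, transported via $\mathfrak{L}$, satisfies the same branching/coinvariant identities as $\omega_{G_{(1)},G'_{(1)},1}$ — using Proposition \ref{coinv-submod} to compare parabolic restrictions on both sides — and, since a Weil representation of a given unitary dual pair restricted to a fixed pair of Lusztig series is determined by these identities together with its multiplicities (which are governed by the unipotent theory), conclude equality.

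The main obstacle, I expect, is not any single step but the bookkeeping that glues the two factors: one must check that the tensor decomposition $\mathbf{R}_{G_\#,1}\otimes\omega_{G_{(1)},G'_{(1)},1}$ respects the multiplicities of $\omega_{G,G',t}$ simultaneously on both factors, i.e. that Howe does not introduce ``cross terms'' coupling the $G_\#$ and $G_{(1)}$ data. This is where the precise form of Theorem \ref{HoweHarish-Chandra} — that the cuspidal datum changes only by appending or deleting copies of the trivial character of $\GL_1$, and that these copies are confined to the $[1]$-block — does the real work; verifying that this confinement is compatible with the decomposition of centralizers in Section \ref{CentralizersSection} is the crux. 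Everything else reduces to transporting the already-known unipotent-case statement (Theorem 3.7 of \cite{AMR}, equivalently the $s=1$ case of the present theorem) along the Lusztig bijection, which is an isometry and hence preserves all the relevant multiplicity data.
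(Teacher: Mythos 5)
This statement is cited verbatim from Aubert--Michel--Rouquier (Th\'eor\`eme~2.6 of \cite{AMR}); the paper offers no proof of it, so there is no in-paper argument to compare against. What can be assessed is whether your route, via Theorem~\ref{HoweHarish-Chandra}, can stand on its own. It cannot, for a concrete reason of resolution.

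Theorem~\ref{HoweHarish-Chandra} is a statement about Harish-Chandra \emph{series}: it says the image of $\mathscr{E}(G_m,\boldsymbol{\sigma}\otimes\varphi)_{\mathbf{t}}$ under $\Theta_{m,m'}$ lies in the span of a single series $\mathscr{E}(G'_{m'},\boldsymbol{\sigma}'\otimes\varphi')_{\mathbf{t}'}$, and it describes how the cuspidal datum changes. That is purely qualitative information. The conclusion you want, namely that the Lusztig transport of $\omega_{G,G',t}$ equals the specific representation $\mathbf{R}_{G_\#,1}\otimes\omega_{G_{(1)},G'_{(1)},1}$, is a precise multiplicity statement: $\mathbf{R}_{G_\#,1}=\sum_\rho \rho\otimes\tilde\rho$ asserts that on the $G_\#\times G'_\#$ factor the pairing is the identity matching with multiplicity exactly one. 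Nothing in Theorem~\ref{HoweHarish-Chandra} bounds, let alone computes, the multiplicity $\langle\omega_{G,G',t},\,\pi\otimes\pi'\rangle$ for individual irreducibles. Your own phrasing signals this: you say the multiplicities ``are governed by the unipotent theory'' and then appeal to equality, but the unipotent theory you would be appealing to is precisely the content of the theorem at $s=1$, together with the multiplicative structure over the centralizer factors, which is exactly what is being proved. The ``no cross terms'' claim you flag as the crux is likewise asserted, not established: knowing how the Harish-Chandra cuspidal datum $(\mathbf{t},\boldsymbol{\sigma},\varphi)$ changes under $\Theta$ does not tell you how the Lusztig-series decomposition interacts with parabolic induction/restriction, and that compatibility (Lusztig correspondence vs.\ parabolic functors) is itself a nontrivial input, proved via uniform projections and Deligne--Lusztig characters, that you have not invoked.

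For contrast, the actual proof in \cite{AMR} does not route through Harish-Chandra series at all. It expresses the relevant piece of the Weil representation as a linear combination of products of Deligne--Lusztig characters $R_{T\times T'}^{G\times G'}(s\times s')$, exploits that the Lusztig correspondence is an isometry which, on uniform functions, intertwines Deligne--Lusztig induction on $G$ with that on $C_{G^*}(s)$, and then reads off the tensor decomposition over the centralizer factors directly from the resulting uniform expression. That character-theoretic argument produces exact multiplicities, which is what the statement requires. Your reduction strategy is not circular, but it operates at a coarser level than the theorem's content and would need to be supplemented by an explicit multiplicity calculation, or equivalently by the Deligne--Lusztig uniform-projection argument, before it could close.
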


\subsubsection{Symplectic-orthogonal pairs}

Suppose that $\mathbf{G}$ is a symplectic, or an even-orthogonal group. Let $s$ be a rational semisimple element in $\mathbf{G}^*$, let $\mathbf{G}_\#$ denote the product of $\mathbf{G}_{[\lambda]}(s)$ for $\lambda\neq \pm 1$, let $\mathbf{G}_{(-1)}=\mathbf{G}_{[-1]}(s)$, and $\mathbf{G}_{(1)}$ be the Langlands dual of $\mathbf{G}_{[1]}(s)$. Again, considering the decomposition of centralizers discussed above, and that since by (\ref{UnipotentLusztigBijection}) the unipotent Lusztig series of $\mathbf{G}_{(1)}$ and $\mathbf{G}_{[1]}(s)$ can be identified, we obtain a modified Lusztig bijection 
\begin{align}\label{Symplectic-OrthogonalPanBijection}
\Xi_s : \mathscr{E}(G,(s))\simeq \mathscr{E}(G_\#,1)\times\mathscr{E}(G_{(-1)},1)\times\mathscr{E}(G_{(1)},1).
\end{align}


For $\pi$ in $\mathscr{E}(G,(s))$ we will denote by $\pi_\#$, $\pi_{(-1)}$ and $\pi_{(1)}$ the (unipotent) representations of $G_\#$, $G_{(-1)}$ and $G_{(1)}$ such that 
$$
\Xi_s(\pi)=\pi_\#\otimes \pi_{(-1)}\otimes\pi_{(1)}.
$$

Let $(G,G')$ be a symplectic-orthogonal dual pair, and $s$ (resp. $s'$) be a semisimple element of $G^*$ (resp. $G^{'*}$).  

\begin{prop}\label{Symplectic-OrthogonalPanGroups}
The groups $G_\#$ and $G_{(-1)}$ are isomorphic to $G'_\#$ and $G'_{(-1)}$ respectively. Moreover, the pair $(G_{(1)},G'_{(1)})$ is a symplectic orthogonal dual pair. 
\end{prop}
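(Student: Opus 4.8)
The plan is to follow the proof of Proposition~\ref{UnitaryPanGroups} almost verbatim, reading everything off the explicit centralizer decomposition $C_{\mathbf{G}^*}(s)=\prod_{[\lambda]}\mathbf{G}_{[\lambda]}(s)$ recalled in Section~\ref{CentralizersSection}. The only input coming from the dual-pair hypothesis is the shape of $s'$ relative to $s$: as recalled above (Proposition~2.3 in \cite{AMR}), for a symplectic-orthogonal pair with Witt indices $m$ and $m'$ there are an $l\le\min(m,m')$ and a $t\in\mathbf{T}_{l,0}$ with $s=(t,1)$ and $s'=(t,1)$. Hence $\nu_\lambda(s)=\nu_\lambda(s')=\nu_\lambda(t)$ and the Frobenius orbit $[\lambda]$ is the same whether computed from $s$ or from $s'$, for every $\lambda\ne 1$ --- in particular for $\lambda=-1$ --- while $\nu_1(s)=m-l$ and $\nu_1(s')=m'-l$.

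First I would handle $G_\#$, exactly as in the unitary case. By Section~\ref{CentralizersSection}, $\mathbf{G}_\#$ (resp. $\mathbf{G}'_\#$) is the product of the factors $\mathbf{G}_{[\lambda]}(s)$ (resp. the analogous factors for $s'$) over the Frobenius orbits $[\lambda]$ with $\lambda\ne\pm 1$, and each such factor is a general linear or unitary group whose isomorphism type is pinned down by $|[\lambda]|$ and $\nu_\lambda$. Since these data agree for $s$ and $s'$, the two products agree factor by factor, whence $G_\#\simeq G'_\#$.

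Next, the $(-1)$-part: items~(2) and~(3) of Section~\ref{CentralizersSection} identify $\mathbf{G}_{(-1)}$ and $\mathbf{G}'_{(-1)}$ with the even orthogonal groups $\Or_{2\nu_{-1}(s)}$ and $\Or_{2\nu_{-1}(s')}$, of equal rank by the first paragraph. For the $(1)$-part, the closing remark of Section~\ref{CentralizersSection} says that $\mathbf{G}_{[1]}(s)$ is a group of the same kind as $\mathbf{G}$ --- namely $\SO_{2\nu_1(s)+1}$ when $G$ is symplectic and $\Or_{2\nu_1(s)}$ when $G$ is even orthogonal --- so its Langlands dual $\mathbf{G}_{(1)}$ is again of the same kind as $G$. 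Applying this to both members of the pair, $G_{(1)}$ is symplectic exactly when $G$ is, and $G'_{(1)}$ is orthogonal exactly when $G'$ is; so $(G_{(1)},G'_{(1)})$ consists of one symplectic and one orthogonal group (of Witt indices $m-l$ and $m'-l$), which is precisely what it means to be a symplectic-orthogonal dual pair.

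The one step demanding real care --- and the only place where the argument goes beyond the unitary case --- is the bookkeeping of orthogonal \emph{types} (discriminants), which enters twice: in $\mathbf{G}_{(-1)}$ and $\mathbf{G}'_{(-1)}$, and, when $G'$ is even orthogonal, in $G'_{(1)}$. One must check that the type of $\mathbf{G}_{[\lambda]}(s)$ for $\lambda=\pm 1$ is governed solely by the $\lambda$-eigenspace of $s$ with its form, which is unchanged under $s\mapsto(s,1)$; this is precisely the sign-tracking carried out in Section~1.B of \cite{AMR}. Granting it, $G_{(-1)}\simeq G'_{(-1)}$ and the type of $G'_{(1)}$ are both forced, and the proposition follows. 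I expect this sign bookkeeping to be the sole genuine obstacle; the rest is a direct transcription of the centralizer recipe of Section~\ref{CentralizersSection}.
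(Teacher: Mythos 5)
Your proposal follows the paper's own argument essentially verbatim: both proceed by reading off the explicit centralizer decomposition of Section~\ref{CentralizersSection}, noting that $s$ and $s'$ share the same eigenvalues $\lambda\ne 1$ with multiplicities (so $G_\#\simeq G'_\#$ and both $\mathbf{G}_{[-1]}(s)$, $\mathbf{G}'_{[-1]}(s')$ are $\Or_{2\nu_{-1}(t)}$), and then invoking items~(2) and~(3) to conclude that the Langlands dual of $\mathbf{G}_{[1]}(s)$ is symplectic and that of $\mathbf{G}'_{[1]}(s')$ is even orthogonal. Your extra paragraph flagging the discriminant/type bookkeeping for the even orthogonal factors is a legitimate point of care that the paper's proof passes over silently, but it does not change the route; your appeal to the sign tracking in Section~1.B of \cite{AMR} is the right place to discharge it.
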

\begin{proof}
The proof of the first assertion  is the same as that of Proposition \ref{UnitaryPanGroups}, with the difference that the groups $\boldsymbol{G}_{[-1]}(s)$ and $\boldsymbol{G}'_{[-1]}(s')$ are in this case isomorphic to $\boldsymbol{\Or}_{2\nu_{-1}(t)}$. For the second assertion, since $\mathbf{G}$ is symplectic, the group $\mathbf{G}_{[1]}(s)$ is special odd orthogonal and hence its dual is again symplectic. Likewise, the group $\mathbf{G}'_{[1]}(s')$ is even orthogonal, and so is its dual.
\end{proof}

As for unitary pairs, we now describe the Weil representation $\omega_{G,G',t}$ for symplectic-orthogonal pairs.

\begin{thm}\label{Symplectic-OrthogonalReductionUnipotentCase}\cite[Theorem 6.9 and Remark 6.10]{Pan5}
Let $t$ belong to $\mathbf{T}_{l,0}$. For a symplectic orthogonal dual pair $(G,G')$ the representation $\omega_{G,G',t}$ is the image by the Lusztig correspondence
$$
\mathscr{E}(G\times G',(s\times s')) \simeq \mathscr{E}(C_{G^*}(s)\times C_{G^{'*}}(s'),1).
$$
of the representation
$$
\mathbf{R}_{G_\#,1}\otimes \mathbf{R}_{G_{(-1)},1}\otimes\omega_{G_{(1)},G'_{(1)},1}.
$$
\end{thm}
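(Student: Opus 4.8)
The plan is to follow the strategy used for unitary pairs in Th\'eor\`eme 2.6 of \cite{AMR}, adapting it to the extra orthogonal factor $G_{(-1)}$ coming from the eigenvalue $-1$. Since $\mathfrak{L}_s\times\mathfrak{L}_{s'}$ is an isometry between $\mathscr{R}(G\times G',(s\times s'))$ and $\mathscr{R}(C_{G^*}(s)\times C_{G'^*}(s'),1)$, it is equivalent to prove the character identity
$$
\langle \omega_{G,G',t},\pi\otimes\pi'\rangle_{G\times G'} = \langle \mathbf{R}_{G_\#,1}\otimes \mathbf{R}_{G_{(-1)},1}\otimes\omega_{G_{(1)},G'_{(1)},1},\;\mathfrak{L}_s(\pi)\otimes\mathfrak{L}_{s'}(\pi')\rangle
$$
for all $\pi\in\mathscr{E}(G,(s))$, $\pi'\in\mathscr{E}(G',(s'))$. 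Expanding $\mathbf{R}_{G_\#,1}=\sum_{\rho\in\mathscr{E}(G_\#,1)}\rho\otimes\tilde\rho$ and $\mathbf{R}_{G_{(-1)},1}=\sum_{\rho\in\mathscr{E}(G_{(-1)},1)}\rho\otimes\tilde\rho$, and writing $\Xi_s(\pi)=\pi_\#\otimes\pi_{(-1)}\otimes\pi_{(1)}$, $\Xi_{s'}(\pi')=\pi'_\#\otimes\pi'_{(-1)}\otimes\pi'_{(1)}$ (using the identifications $G_\#\simeq G'_\#$, $G_{(-1)}\simeq G'_{(-1)}$ of Proposition \ref{Symplectic-OrthogonalPanGroups}), the right-hand side collapses to $\langle\omega_{G_{(1)},G'_{(1)},1},\pi_{(1)}\otimes\pi'_{(1)}\rangle$ when $\pi'_\#\simeq\tilde\pi_\#$ and $\pi'_{(-1)}\simeq\tilde\pi_{(-1)}$, and to $0$ otherwise. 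So the task is to show that $\omega_{G,G',t}$, read through $\Xi_s$ and $\Xi_{s'}$, imposes exactly the constraint ``the $\#$ and $(-1)$ components are mutually contragredient and the $(1)$ components are paired by the small Weil representation''.

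Next I would compute the character of $\omega_{G,G',t}$ directly from G\'erardin's explicit formula for the character of the Weil representation of $\Sp_{2n}(q)$ evaluated on $(g,g')\in G\times G'$, whose value is governed, up to an explicit fourth root of unity and power of $q$, by $\dim_{\mathbb{F}_q}\ker(g\otimes g'-1)$ on $W\otimes_D W'$. The projection onto the Lusztig series $(s\times s')$ is the usual semisimple-part projector, and one decomposes $W=W_\#\operp W_{(-1)}\operp W_{(1)}$, $W'=W'_\#\operp W'_{(-1)}\operp W'_{(1)}$ along the generalized eigenspaces of the semisimple parts, the groups acting through $G_\#$ (a product of linear/unitary groups), $G_{(-1)}$ (an orthogonal group), and $G_{(1)}$ respectively, and similarly for the primed groups. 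On cross terms with non-matching eigenvalues $\ker(g\otimes g'-1)$ is trivial, so only the ``diagonal'' blocks contribute: on $W_\#\otimes W'_\#$ one is left with the Weil representation of a sum of linear dual pairs, on $W_{(-1)}\otimes W'_{(-1)}$ with the Weil representation attached to two copies of the $(-1)$-eigenspace data, and on $W_{(1)}\otimes W'_{(1)}$ with $\omega_{G_{(1)},G'_{(1)}}$. I would then invoke the known description of the Weil representation of a linear dual pair $(\GL_k,\GL_{k'})$ — through the Lusztig correspondence its projection to a regular semisimple Lusztig series is the regular bimodule — to identify the $W_\#$ block with $\mathbf{R}_{G_\#,1}$, and an analogous argument (the eigenvalue $-1$ degenerates the symplectic side) to identify the $W_{(-1)}$ block with $\mathbf{R}_{G_{(-1)},1}$, leaving $\omega_{G_{(1)},G'_{(1)},1}$ for the last block.

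The main obstacle is that the Lusztig correspondence, and hence the modified bijection $\Xi_s$ of (\ref{Symplectic-OrthogonalPanBijection}), is not canonical, so one must verify that it can be chosen compatibly with the geometric factorization above, factor by factor. I would handle this by first proving the identity after taking uniform projections: there both sides are explicit combinations of Deligne--Lusztig characters $R_T^G\theta\otimes R_{T'}^{G'}\theta'$, Deligne--Lusztig induction commutes with the Lusztig correspondence up to an explicit sign, and the Weil character formula yields the equality at once. One then bootstraps to the full identity by induction on the Witt index, peeling off Harish-Chandra induction via Proposition \ref{coinv-submod} and the compatibility of $\Theta$ with Harish-Chandra series (Theorem \ref{HoweHarish-Chandra}), using that a unipotent representation of a classical group is pinned down within its Harish-Chandra series by its almost-character expansion. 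The even-orthogonal ambiguity (the two-fold indeterminacy of the Lusztig correspondence for $\Or_{2n}$, affecting $G_{(-1)}$ and $G'_{(1)}$) is absorbed into the choices already made in setting up $\Xi_s$, as in Remark 6.10 of \cite{Pan5}.
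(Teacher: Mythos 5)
The statement you were asked to prove is cited in the paper as \cite[Theorem 6.9 and Remark 6.10]{Pan5}; the paper gives no proof of its own, so there is nothing internal to compare against. What I can say is whether your sketch is a plausible reconstruction of Pan's argument and where the genuine gaps are.

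Your overall strategy is the right one: reduce via isometry of the Lusztig correspondence to a multiplicity identity, expand the regular bimodules $\mathbf{R}_{G_\#,1}$ and $\mathbf{R}_{G_{(-1)},1}$ to see that the right-hand side is nonzero exactly when the $\#$- and $(-1)$-components are contragredient, and then match this against a direct study of $\omega_{G,G',t}$ via the character formula and the eigenspace decomposition of $W\otimes W'$. This is indeed the skeleton of the AMR proof for unitary pairs and of Pan's Theorem~6.9. Two places in your sketch are more than expository gaps, though. First, the passage from the uniform-projection identity to the actual identity is the real content. For symplectic and even-orthogonal groups, knowing the uniform projections agree does not by itself pin down irreducible constituents; Pan has to invoke quite specific lemmas (his Propositions~3.3 and 3.5) saying that, within the relevant Lusztig series, the uniform projection determines the representation up to a possible twist by $\sgn$ on the orthogonal side. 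You cite the ``almost-character expansion'' but the issue is precisely that almost characters only control the uniform part, and the non-uniform part of the Howe correspondence is where all the subtlety lives. Second, the sentence ``the even-orthogonal ambiguity \ldots is absorbed into the choices already made in setting up $\Xi_s$'' understates the difficulty: the potential $\sgn$-twist appears on two independent orthogonal factors ($G_{(-1)}$ and $G'_{(1)}$ in the even-orthogonal member of the pair), and Remark~6.10 of \cite{Pan5} is precisely about showing that a consistent choice exists simultaneously for all $t\in\mathbf{T}_{l,0}$ and both members of the pair. That compatibility is not free; it is a statement that has to be proved, not a normalization that one simply fixes. Finally, your heuristic about $\ker(g\otimes g'-1)$ on the eigenspace blocks is the right picture, but the ``the eigenvalue $-1$ degenerates the symplectic side'' step needs an actual argument: on the $(-1)$-blocks one obtains a pair of even orthogonal groups of the same rank (since $s$ and $s'$ share their eigenvalues $\neq 1$), and identifying the resulting piece with the regular bimodule requires the same kind of linear-pair analysis you invoke for $G_\#$, together with the $\sgn$-bookkeeping above.

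In short: correct strategy, same route Pan takes, but the two points where the theorem is actually hard --- promoting uniform-projection equalities to true equalities, and making the orthogonal $\sgn$-twists globally consistent --- are asserted rather than argued, and they are exactly the content of the cited Theorem~6.9 and Remark~6.10.
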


\section{Minimal representations}\label{SectionExtremalRepresentations}

Let $\mathbf{G}$ be a reductive group defined over $\mathbb{F}_q$, and $\mathbf{P}=\mathbf{L}\mathbf{U}$ be a Levi decomposition of the rational parabolic subgroup $\mathbf{P}$. For a cuspidal representation $\rho$ of $L$ set
$$
W_\mathbf{G}(\rho) = \{x\in N_{G}(\mathbf{L})/L : {}^x\rho=\rho\}.
$$
By Corollary 5.4 in \cite{HL} and Corollary 2 in \cite{Geck}, there is an isomorphism 
\begin{align}\label{Howlett-Lehrer}
\End_G(R_\mathbf{L}^\mathbf{G}(\rho)) \simeq \mathbb{C}[W_\mathbf{G}(\rho)].
\end{align}
In particular, irreducible representations in the Harish-Chandra series $\mathscr{E}(G,\rho)_L$ are indexed by irreducible representations of $W_\mathbf{G}(\rho)$.

When $G$ is a type I group and $\rho$ is a cuspidal unipotent representation, the group $ W_\mathbf{G}(\rho)$ above is a type B Weyl group. It is known that irreducible representations of these groups are parametrized by bipartitions. We denote by $\rho_{\mu,\lambda}$ the representation in $\mathscr{E}(W_r)$ corresponding to the bipartition $(\mu,\lambda)$ of $r$.

 For every irreducible representation $\pi$ of a connected reductive group $G$, there is a unique rational unipotent class $\mathscr{O}_{\pi}$ in $G$ which has the property that $\sum_{x\in \mathscr{O}_{\pi}(q)}\pi(x)$ is non trivial, and that has maximal dimension among classes with this property. This class, introduced by Lusztig in \cite{Lusztig4} is called the \emph{unipotent support} of $\pi$.
 
We now introduce a partial order on the set of unipotent conjugacy classes. It is crucial for results below. 

\begin{defi} 
The relation on the set of unipotent conjugacy classes of $G$, given by $\mathcal{O}'\preceq\mathcal{O}$, if and only if $\mathcal{O}'\subset\overline{\mathcal{O}}$, is a partial order. We refer to it as the \emph{closure order}.
\end{defi} 
 
Let $(G,G')$ be a type I dual pair. In \cite{Epequin} we defined a bijective correspondence $\theta_{G,G'}:\mathscr{E}(G,1)\rightarrow\mathscr{E}(G',1)$ between the unipotent series of $G$ and $G'$, in such a way that, for a unipotent representation $\pi$ of $G$

-- The representation $\theta(\pi)$ occurs in $\Theta(\pi)$.

-- If $\pi'$ belongs to $\Theta(\pi)$, then $\mathcal{O}_{\theta(\pi)}\preceq\mathcal{O}_{\pi'}$ 

 The last item above asserts that $\theta_{G,G'}(\pi)$ has the smallest unipotent support among irreducible representations in $\Theta_{G,G'}(\pi)$, it is in this sense ``minimal''.

 Before extending this to arbitrary representations, we recall the definition for unipotent representations. We work with symplectic orthogonal and unitary pairs independently.
 
\subsection{Unitary pairs}
Unipotent representations of the unitary groups $\U_n(q)$ are known to be indexed by partitions of $n$. Moreover, those belonging to the same Harish-Chandra series share a common $2$-\emph{core} and are therefore determined by their $2$-\emph{quotient} (of parameter one) \cite{AMR}. If we let $R_\mu$ be the representation of $\U_n(q)$ indexed by the partition $\mu$ of $n$, then the bijection issued from (\ref{Howlett-Lehrer}) relates $R_\mu$ to $\rho_{\mu(0),\mu(1)}$, where $(\mu(0),\mu(1))$ is the 2-quotient (of parameter 1) of $\mu$.  

Let $(G_m,G'_{m'})$ denote a unitary dual pair. According to Theorem 3.7 in \cite{AMR} the Howe correspondance relates the unipotent series $\mathscr{E}(G_m)_\varphi$ to the series $\mathscr{R}(G'_{m'})_{\varphi'}$, where $\varphi'$ is the first occurrence of $\varphi$. Moreover, since $\varphi\in\mathscr{E}(G_l)$ and $\varphi'\in\mathscr{E}(G'_{l'})$ are cuspidal and unipotent, the integers $l$ and $l'$ are triangular, i.e. $l=k(k+1)/2$ and $l'=k'(k'+1)/2$ for some $k$ and $k'$. From the discussion in the previous paragraph, the theta correspondence between the series above can be identified to a correspondence between $\mathscr{E}(W_r)$ and $\mathscr{R}(W_{r'})$, for suitable $r$ and $r'$. We can now define a bijection $\theta:\mathscr{E}(W_r)\rightarrow\mathscr{E}(W_{r'})$ issued from the one above :

 
 -- $\theta(\lambda,\mu)=((r'-r)\cup\mu,\lambda)$, if $k$ is odd or zero.
 
 -- $\theta(\lambda,\mu)=(\mu,(r'-r)\cup\lambda)$, otherwise

Let $\mathcal{O}_\mu$ be the unipotent support of the unipotent character $R_\mu$. The closure order among unipotent supports agrees with the dominance order on the indexing partition \cite{Taylor}, i.e. $\mathcal{O}_\mu\preceq\mathcal{O}_\nu$ if and only if $\mu\leq\nu$. The definition of $\theta$ made above is done so that the unipotent support is at its smallest. Indeed, if $\theta(\mu)$ is the partition whose 2-quotient is $\theta(\mu(0),\mu(1))$, then for all  representations $R_{\mu'}$ in $\Theta(R_\mu)$ we have $\theta(\mu)\leq \mu'$. We obtain in this way a bijection $\theta_{G,G'}$ between the set of unipotent characters of $G$ and $G'$.
  


Let $\iota$ be an involution of sending a representation $\pi$ to its dual $\tilde{\pi}$. For the definition in the general case we use Theorem \ref{UnitaryReductionUnipotentCase}, which we can express as a commutative diagram
 
\begin{center} 
\begin{tikzcd}
 \mathscr{E}(G,(s)) \arrow[d, "\Theta_{G,G'}"] \arrow[r, "\Xi_{s}", "\sim" swap] 
                                                &  \mathscr{E}(G_\#\times G_{(1)},1) \arrow[d, "\iota\hspace{0.05cm}\otimes\hspace{0.05cm}\Theta_{G_{(1)},G'_{(1)}}"] \\
\mathscr{R}(G',(s')) \arrow[r,  "\Xi_{s'}", "\sim" swap] 
 												   & \mathscr{R}(G_\#\times G'_{(1)},1).
 \end{tikzcd}
\end{center}




This motivates us to define $\theta_{G,G'}:\mathscr{E}(G)\rightarrow\mathscr{E}(G')$ by
$$
\Xi_{s'}(\theta_{G,G'}(\pi))=\tilde{\pi}_\#\times \theta_{G_{(1)},G'_{(1)}}(\pi_{(1)}).
$$
The representation $\theta_{G_{(1)},G'_{(1)}}(\pi_{(1)})$ has alredy been defined since $\pi_{(1)}$ is unipotent. We are therefore extending the definition of $\theta_{G,G'}$ to the whole set of irreducible representations, so that it is congruent with the Lusztig correspondence. It is only natural to ask if this extension also selects representations with smallest possible unipotent support. The answer is provided in a subsequent section.
 

 
\subsection{Symplectic-orthogonal pairs}
Let $(G_m,G'_{m'})$ be a dual pair $(\Sp_{2m}(q),\Or_{2m'}^\epsilon)$. Again, according to Theorem 3.7 in \cite{AMR}, the Howe correspondence $\Theta_{m,m'}$ relates the unipotent Harish-Chandra series $\mathscr{E}(G_m)_\varphi$ to the series $\mathscr{E}(G'_{m'})_{\varphi'}$, where $\varphi'$ is the first occurrence of $\varphi$. Moreover, since $\varphi\in\mathscr{E}(G_l)$ and $\varphi'\in\mathscr{E}(G'_{l'})$ are cuspidal and unipotent, $l=k(k+1)$ and $l'=k^{'2}$ for some $k$ and $k'$. Again, thanks to (\ref{Howlett-Lehrer}), both these series can be identified to the set of representations of certain Weyl groups of type B. Hence, the correspondence between unipotent representations becomes a correspondence between the set of irreducible representations of a pair $(W_r,W_{r'})$ of type B Weyl groups, for suitable $r$ and $r'$. We now define the one-to-one correspondence $\theta: \Irr(W_r)\rightarrow\Irr(W_{r'})$ as follows :

-- $\theta(\lambda,\mu)=(\lambda,(r'-r)\cup\mu)$, when $k =\frac{1}{2}( \epsilon-1)$ mod $2$. 

-- $\theta(\lambda,\mu)=((r'-r)\cup\lambda,\mu)$, otherwise.

Using the Springer correspondence to calculate the unipotent support, in \cite{Epequin} we are able to prove that the choice above is made so that the support of $\theta(\pi)$ it the smallest in $\Theta(\pi)$ for unipotent $\pi$. We obtain in this way a bijection $\theta_{G,G'}$ between the set of unipotent characters of $G$ and $G'$.



Again, Let $\iota$ be an involution of sending a representation to its dual. For the general case, we use Theorem \ref{Symplectic-OrthogonalReductionUnipotentCase}, which again we express as a commutative diagram 

\begin{center} 
\begin{tikzcd}
 \mathscr{E}(G,(s)) \arrow[d, "\Theta_{G,G'}"] \arrow[r, "\Xi_{s}", "\sim" swap] 
                                                &  \mathscr{E}(G_\#\times G_{(-1)}\times G_{(1)},1) \arrow[d, "\iota\hspace{0.05cm}\otimes\hspace{0.05cm}\iota\hspace{0.05cm}\otimes\hspace{0.05cm}\Theta_{G_{(1)},G'_{(1)}}"] \\
\mathscr{R}(G',(s')) \arrow[r,  "\Xi_{s'}", "\sim" swap] 
 												   & \mathscr{R}(G_\#\times G_{(-1)}\times G'_{(1)},1).
 \end{tikzcd}
\end{center}


This moves us to define $\theta_{G,G'}(\pi)$ to be the representation such that
$$
\Xi_s(\theta_{G,G'}(\pi))=\tilde{\pi}_\#\otimes\tilde{\pi}_{(-1)}\otimes\theta_{G_{(1)},G'_{(1)}}(\pi_{(1)}).
$$
We stress that the representation $\theta_{G_{(1)},G'_{(1)}}(\pi_{(1)})$ has already been defined since $\pi_{(1)}$ is unipotent. We are therefore extending the definition of $\theta$ from unipotent to arbitrary representations making sure it is compatible with the Lusztig correspondence. Again, it seems reasonable to ask if this extension also selects a representation with smallest unipotent support. We provide the answer in the following section.

\subsection{Lusztig correspondence and unipotent support}

Assume $G$ is a connected reductive group. Let $P$ be a parabolic subgroup of $G$ with Levi decomposition $P=LU$. Following \cite{Spaltenstein} we introduce an induction functor on unipotent classes from $L$ to $G$ as follows : for a unipotent class $\mathcal{O}$ in $L$, there exists a unique unipotent class $\tilde{\mathcal{O}}$ of $G$ such that $\tilde{\mathcal{O}}\cap \mathcal{O}U$ is dense in $\mathcal{O}U$. We say that $\tilde{\mathcal{O}}$ is the class obtained inducing $\mathcal{O}$ from $L$ to $G$, and we write $\tilde{\mathcal{O}}=\Ind_L^G(\mathcal{O})$. This definition does not depend on the parabolic $P$ containing $L$. Moreover, according to Proposition II.3.2 in \cite{Spaltenstein}\begin{align}\label{InductionClosure}
\overline{\Ind_L^G(\mathcal{O})}=\bigcup_{x\in G}x\overline{\mathcal{O}U}x^{-1}
\end{align}

Let $s$ be a rational semisimple element of $\mathbf{G^*}$, and let $G(s)$ be the Langlands dual of $C_{G^*}(s)$. Since the unipotent series of these two groups can be identified, we have a Lusztig bijection between the series of $G$ defined by $s$ and the unipotent series of $G(s)$. Denote by $\rho_u$ the unipotent representation of $G(s)$ corresponding to $\rho$ in $\mathscr{E}(G,(s))$. If the group $C_{G^*}(s)$ can be identified to a Levi subgroup of $G^*$ then, according to Proposition 4.1 in \cite{GM}
\begin{align*}
\mathcal{O}_{\rho} = \Ind_{G(s)}^G(\rho_u),
\end{align*}
i.e. the unipotent supports of corresponding characters are related by the induction of classes defined above.

For both symplectic-orthogonal and unitary dual pairs $(G,G')$, we have first defined the bijection $\theta_{G,G'}$ on the set of unipotent representations combinatorially. We have then used the Lusztig correspondence to extend this definition to all irreducible representations. In the unipotent case the definition was made so as to minimize the unipotent support. If we aim at proving that this also holds for arbitrary representations, we must study the effect of the Lusztig correspondence on the unipotent support. The following result addresses this issue. As in (\ref{LusztigBijection}), we consider the Lusztig correspondence between the series $\mathscr{E}(G,(s))$ and $\mathscr{E}(C_{G^*}(s),1)$. 

\begin{thm}\label{IncreasingLusztig}
Let $\pi$, $\tau$ belong to $\mathscr{E}(G,(s))$, and $\pi_u$, $\tau_u$ denote the corresponding unipotent representations of $C_{G^*}(s)$. If $\mathcal{O}_{\pi_u}\preceq\mathcal{O}_{\tau_u}$ then $\mathcal{O}_{\pi}\preceq\mathcal{O}_{\tau}$. In particular, for $\pi$ in $\mathscr{E}(G)$, the subrepresentation $\theta_{G,G'}(\pi)$ of $\Theta_{G,G'}(\pi)$ has the smallest unipotent support.
\end{thm}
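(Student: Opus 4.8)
The plan is to reduce the statement about the closure order on unipotent supports to the known behavior of Lusztig induction of unipotent classes, using the formula $\mathcal{O}_{\rho}=\Ind_{G(s)}^{G}(\rho_u)$ recalled just before the statement. The first step is to observe that, although that formula is quoted only when $C_{G^*}(s)$ is a Levi subgroup of $G^*$, for the classical groups at hand the centralizer $C_{G^*}(s)$ decomposes as a product $\mathbf{G}_\#(s)\times\mathbf{G}_{(-1)}(s)\times\mathbf{G}_{(1)}(s)$ (or the unitary analogue), where each factor is again a classical group of the same Lie type, and the ambient group $\mathbf{G}^*$ contains a rational Levi of the form $\GL_{t_1}\times\cdots\times\GL_{t_r}\times\mathbf{G}_{(1)}^*$ with the $\GL$-factors accounting for the non-$\pm1$ eigenvalues. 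So I would first show that $\mathcal{O}_\pi$ depends on $\pi_u$ only through a composite: transport $\pi_u$ through the identification of unipotent series of $\mathbf{G}_{(1)}(s)$ with those of $\mathbf{G}_{[1]}(s)$, regard the $\#$ and $(-1)$ data as bipartition/partition labels of cuspidal-unipotent Harish-Chandra series of a fixed Levi, and then apply $\Ind$ from that Levi to $G$. Concretely, $\mathcal{O}_\pi=\Ind_{L}^{G}(\mathcal{O}_{\pi_u})$ for an appropriate rational Levi $L$ of $G$ whose component group data matches $C_{G^*}(s)$; this is exactly the content of Proposition 4.1 in \cite{GM} applied factor by factor, together with transitivity of $\Ind$.

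The second, and decisive, step is monotonicity of $\Ind_L^G$ for the closure order. Here I would invoke the formula $\overline{\Ind_L^G(\mathcal{O})}=\bigcup_{x\in G}x\,\overline{\mathcal{O}U}\,x^{-1}$ from \eqref{InductionClosure}: if $\mathcal{O}_{\pi_u}\subseteq\overline{\mathcal{O}_{\tau_u}}$ in $L$, then $\mathcal{O}_{\pi_u}U\subseteq\overline{\mathcal{O}_{\tau_u}}\,U\subseteq\overline{\mathcal{O}_{\tau_u}U}$, hence $\overline{\mathcal{O}_{\pi_u}U}\subseteq\overline{\mathcal{O}_{\tau_u}U}$, and taking $G$-saturations gives $\overline{\Ind_L^G(\mathcal{O}_{\pi_u})}\subseteq\overline{\Ind_L^G(\mathcal{O}_{\tau_u})}$, i.e. $\mathcal{O}_\pi\preceq\mathcal{O}_\tau$. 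Combined with Step 1 this proves the first assertion. The one subtlety to check carefully is that the closure order on unipotent classes of a product $L=L_1\times\cdots\times L_r$ really is the product of the closure orders on the factors (true, since closures of products are products of closures), so that $\mathcal{O}_{\pi_u}\preceq\mathcal{O}_{\tau_u}$ in $C_{G^*}(s)$ is equivalent to the corresponding inequality factor by factor — and dualizing $\mathbf{G}_{(1)}(s)\leftrightarrow\mathbf{G}_{(1)}$ preserves the closure order on unipotent classes for the classical groups involved.

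For the final ("In particular") clause, I would combine this monotonicity with the unipotent-case results already established: by construction $\theta_{G,G'}$ is defined through $\Xi_{s'}$ so that $\theta_{G,G'}(\pi)_{(1)}=\theta_{G_{(1)},G'_{(1)}}(\pi_{(1)})$, and from the unipotent theory of \cite{Epequin} the class $\mathcal{O}_{\theta_{G_{(1)},G'_{(1)}}(\pi_{(1)})}$ is minimal among unipotent supports of constituents of $\Theta_{G_{(1)},G'_{(1)}}(\pi_{(1)})$; the $\#$ and $(-1)$ components are merely dualized, which does not change their unipotent support. By Theorems \ref{UnitaryReductionUnipotentCase} and \ref{Symplectic-OrthogonalReductionUnipotentCase}, any constituent $\pi'$ of $\Theta_{G,G'}(\pi)$ has $\pi'_u$ of the form $(\text{dual of }\pi_\#)\otimes(\cdots)\otimes\varphi'$ with $\varphi'$ a constituent of $\Theta_{G_{(1)},G'_{(1)}}(\pi_{(1)})$, so $\mathcal{O}_{\theta_{G,G'}(\pi)_u}\preceq\mathcal{O}_{\pi'_u}$ in $C_{G^{'*}}(s')$, and the first part of the theorem upgrades this to $\mathcal{O}_{\theta_{G,G'}(\pi)}\preceq\mathcal{O}_{\pi'}$. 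The main obstacle I anticipate is Step 1: justifying that Proposition 4.1 of \cite{GM} — stated for $C_{G^*}(s)$ a Levi — can be bootstrapped to the general classical-group case via the product decomposition and transitivity of $\Ind$, and pinning down that the relevant Levi $L$ of $G$ is rational and contained in a rational parabolic so that $\Ind_L^G$ is even defined; everything after that is the soft closure-order bookkeeping of Step 2.
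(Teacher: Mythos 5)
Your Step 2 (monotonicity of $\Ind_L^G$ from the closure formula \eqref{InductionClosure}) and your handling of the final clause both match the paper's argument. The problem is Step 1 in the non-Levi case, which you yourself flag as the main worry, and indeed the bootstrap you propose does not go through.

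For a type A group, $C_{\mathbf{G}^*}(s)$ is always a Levi of $\mathbf{G}^*$, so the formula $\mathcal{O}_\rho=\Ind_{G(s)}^G(\rho_u)$ from Proposition 4.1 of \cite{GM} applies directly and the closure-formula argument finishes the proof. But for symplectic and orthogonal $\mathbf{G}$, once $s$ has an eigenvalue $-1$ (or, for orthogonal $\mathbf{G}$, an eigenvalue $1$), the factor $\mathbf{G}_{[-1]}(s)\simeq\boldsymbol{\Or}_{2\nu_{-1}(s)}$ is not a factor of \emph{any} Levi of $\mathbf{G}^*$: Levi subgroups of classical groups have shape $\GL_{t_1}\times\cdots\times\GL_{t_r}\times\mathbf{G}_k^*$ with a single classical block of the \emph{same} type as $\mathbf{G}^*$. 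The Levi you describe, $\GL_{t_1}\times\cdots\times\GL_{t_r}\times\mathbf{G}_{(1)}^*$, is strictly larger than $C_{\mathbf{G}^*}(s)$ and does not see the $(-1)$-component at all, so there is no ``appropriate rational Levi $L$ of $G$ whose component group data matches $C_{G^*}(s)$.'' Transitivity of $\Ind$ cannot manufacture such an $L$; the obstruction is structural. What the paper actually does in the non-Levi case is switch to Spaltenstein's \emph{generalized} induction of unipotent classes (from a pseudo-Levi such as $C_{\mathbf{G}^*}(s)$ to $\mathbf{G}$), invoking Proposition 4.5 of \cite{GM} for the analogue of the support formula and Remarque III.12.4.2 of \cite{Spaltenstein} for the monotonicity of that generalized induction with respect to the closure order. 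Without that replacement, your argument covers only the type A / Levi-centralizer situation, which is precisely the easy half; you would need to import the generalized induction and its monotonicity to complete the symplectic-orthogonal case.
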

\begin{proof}
We consider first the case where the centralizer of $s$ in $G^*$ is a Levi subgroup of $G^*$. From the discussion of centralizers in Section \ref{CentralizersSection}, this is the case for type A groups.

Let $L$ be a Levi subgroup contained in the parabolic $P=LU$. Take two unipotent classes $\mathcal{O}$ and $\mathcal{O}'$ in $L$, such that $\mathcal{O}'$ is contained in the closure $\overline{\mathcal{O}}$. This implies that $\mathcal{O}'U$ is contained in $\overline{\mathcal{O}U}$. Hence, due to (\ref{InductionClosure}) above :
$$
\overline{\Ind_L^G(\mathcal{O}')}=\bigcup_{x\in G}x\overline{\mathcal{O}'U}x^{-1}\subset\bigcup_{x\in G}x\overline{\mathcal{O}U}x^{-1}=\overline{\Ind_L^G(\mathcal{O})},
$$ 
whence $\Ind_L^G(\mathcal{O}')\preceq\Ind_L^G(\mathcal{O})$. Since in this case, as discussed above, the supports of $\pi$, and $\tau$ are obtained inducing those of $\pi_u$, and $\tau_u$ respectively, the result holds.

In the general case, the unipotent support of characters corresponding by the Lusztig bijection are related by the generalized induction defined in \cite{Spaltenstein}; as proven in Proposition 4.5 of \cite{GM}. The result follows from the fact that this induction is increasing (Remarque III.12.4.2 in \cite{Spaltenstein}). 
\end{proof}

\section{Unipotent support and rank}\label{SupportRank}
  
In a recent paper \cite{Howe-Gurevich}, Howe and Gurevich presented the notion of rank for representations of finite symplectic groups. This conduced to the introduction of the \emph{eta} correspondence, defined by the property of ``having maximal rank''; as explained below. 

Let $\mathscr{L}_{n}(q)$ the group of symmetric matrices of size $n$ with coefficients in $\mathbb{F}_q$; or equivalently, of symmetric bilinear forms over a $\mathbb{F}_q$-vector space of dimension $n$ . Consider the \emph{Siegel parabolic} $P$ of $\Sp_{2n}(q)$, with Levi decomposition $P=LN$, where $L\simeq\GL_n(q)$ and 
$$
N = \left\{  \left(\begin{matrix}
1 & A\\
0 & 1
\end{matrix}\right) \mbox{ : } A \mbox{ belongs to } \mathscr{L}_n(q)\right\}.
$$
We identify the group $N$ to the group $\mathscr{L}_{n}(q)$. Being abelian, all its irreducible representations are one-dimensional. Moreover, fixing a non-trivial additive character $\psi$ of $\mathbb{F}_q$, we can define a bijection between $N$ and $\mathscr{E}(N)$ relating a symmetric $A$ to $\psi_A$; the latter being defined by $\psi_A(B)=\psi(\tr(AB))$, for $B$ symmetric.  

Let $\rho$ be a representation of $\Sp_{2n}(q)$. The restriction of $\rho$ to $N$ decomposes as a weighted sum of representations $\psi_A$, for $A$ symmetric. The orbits of the action of $L$ on $N$ can be identified with the orbits of $\GL_n(q)$ on $\mathscr{L}_{n}(q)$, i.e. with equivalence classes of symmetric matrices. The coefficients on the sum above are therefore constant on these classes. The first major invariant of a symmetric bilinear form is its rank. It is well known that, over
finite fields of odd characteristic, there are just two isomorphism classes of symmetric bilinear forms of a given rank $r$. We denote by $\mathscr{O}_r^+$ and $\mathscr{O}_r^-$, the two equivalence classes of symmetric matrices of rank $r$. The restriction of $\rho$ to $N$ decomposes as :
$$
\rho\rvert_N = \sum_{r=0}^n\sum_\pm m_{r\pm} \sum_{A\in\mathscr{O}_r^\pm}\psi_A 
$$
\begin{defi}
-- The \emph{rank} of the character $\psi_A$ is defined as the rank of $A$.

-- The \emph{rank} of $\rho$, denoted by $\rk(\rho)$, is defined as the greatest $k$ such that the restriction to $N$ contains characters of rank $k$, but of no higher rank.
\end{defi}

Consider the dual pair $(G,G') = (O^\pm_{2k}, Sp_{2n'})$ where $2k \leq n'$, i.e., the dual pair is in stable range. Howe and Gurevich show that for $\rho$ in $\mathscr{E}(O^\pm_{2k}(q))$, there is a unique irreducible representation in $\Theta_{G,G'}(\rho)$ of rank $2k$; all other constituents having smaller rank. This gives rise to a mapping called the \emph{eta} correspondence :
$$
\eta : \mathscr{E}(O_{2k}^\pm(q)) \rightarrow \mathscr{E}(Sp_{2n'}(q)).
$$
We now have two one-to-one correspondences $\theta$ and $\eta$ between the set of irreducible representations $\mathscr{E}(G)$ of $G$, and the corresponding set $\mathscr{E}(G')$ of $G'$. They are defined in different ways. The former chooses a subrepresentation of $\Theta$ with smallest unipotent support, whereas the latter selects one with greatest rank. In \cite{Pan3}, Shu-Yen Pan shows that these two ``theta relations'' agree on their common domain of definition (the stable range). This amounts to say that, among the irreducible constituents of $\Theta(\pi)$, the representation with the smallest unipotent support is the one having the greatest rank. This points out to an inverse relation between these two features. We could ask if this also holds for all representations. This is indeed the case for unipotent representations, the proof is based on results yet to be published by Shu Yen Pan (he has managed to calculate the rank of a unipotent representation in terms of its associated symbol). The general case still needs to be settled.

\begin{conj}\label{InverseRelation}
Let $\pi$ and $\tau$ be two representations of $G$. If $\mathcal{O}_\pi \preceq\mathcal{O}_\tau$ then $\rk(\tau)\leq \rk(\pi)$.
\end{conj}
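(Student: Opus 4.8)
The plan is to prove something stronger and cleaner than Conjecture \ref{InverseRelation}, namely that $\rk(\rho)$ depends only on the unipotent support $\mathcal{O}_\rho$, through an order-reversing function on the poset of unipotent classes; the conjecture then follows at once. The stable-range instance is already known (\cite{Pan3}), and, as recalled above, so is the unipotent case, so the real task is the general one. I would split the argument into the unipotent case and the passage to an arbitrary Lusztig series, mirroring the architecture of Theorem \ref{IncreasingLusztig}.

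For unipotent $\pi,\tau$ of $\Sp_{2n}(q)$ (and similarly for orthogonal groups) I would use Pan's forthcoming formula expressing $\rk(\rho)$ through the symbol $\Lambda_\rho$, together with Lusztig's parametrisation of $\mathcal{O}_\rho$ by the special symbol attached to $\Lambda_\rho$. This reduces the statement to a purely combinatorial one: after translating the closure order into the corresponding order on symbols, the rank function $\Lambda\mapsto\rk(\Lambda)$ is order-reversing. I would establish this by the standard elementary moves on symbols -- the same box-shuffling that governs the dominance comparison of the associated partitions used in Section \ref{SectionExtremalRepresentations} -- checking that each such move changes the two quantities in opposite directions, so that the comparison comes down to elementary partition inequalities.

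To pass from unipotent representations to $\mathscr{E}(G,(s))$ and then to all of $\mathscr{E}(G)$, I would combine Theorem \ref{IncreasingLusztig} with the reduction Theorems \ref{UnitaryReductionUnipotentCase} and \ref{Symplectic-OrthogonalReductionUnipotentCase}. Theorem \ref{IncreasingLusztig} already shows that $\mathcal{O}_\rho$ is controlled, through generalised Lusztig--Spaltenstein induction, by the unipotent support of the corresponding unipotent representation $\rho_u$ of $C_{G^*}(s)$; so what is missing is the analogue for rank, namely that $\rk(\rho)$ is determined by the rank of the ``symplectic part'' $\rho_{(1)}$ and by the sizes of the general linear and unitary factors of $C_{G^*}(s)$, monotonically. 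Granting that, the conjecture for $\pi$, $\tau$ in one Lusztig series reduces to the unipotent case applied to the $(1)$-parts together with the monotonicity of induction of classes; the residual case, in which $\mathcal{O}_\pi\preceq\mathcal{O}_\tau$ but $\pi$ and $\tau$ lie in different series, would likewise follow once $\rk(\rho)$ is known to be a function of $\mathcal{O}_\rho$ alone.

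The main obstacle is exactly that last reduction. Rank is defined by restriction to the Siegel unipotent radical, an object carrying no a priori compatibility with the Jordan decomposition of characters, so relating $\rk$ to the Lusztig correspondence across distinct series is not formal; it seems to require either extending Pan's explicit symbol computations to non-unipotent Lusztig series, or a conceptual identification of $\rk(\rho)$ with the dimension of the wave-front set of $\rho$, after which monotonicity in $\mathcal{O}_\rho$ would be immediate from Kawanaka's theory of generalised Gelfand--Graev representations and Spaltenstein duality. Producing one of these two ingredients is, I expect, where the genuine difficulty lies.
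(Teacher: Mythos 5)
This statement is labelled a \emph{conjecture} in the paper, and the surrounding text makes the status explicit: the unipotent case is said to follow from a symbol-level formula for rank due to Pan that is ``yet to be published,'' and ``the general case still needs to be settled.'' There is therefore no proof in the paper to compare yours against, and you should not treat this item as a theorem.

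With that caveat, your roadmap is consistent with what the paper does know. Your unipotent step (translate both $\mathcal{O}_\rho$ and $\rk(\rho)$ into symbol data and check order-reversal by elementary moves) is exactly the route the paper attributes to Pan's forthcoming work, and your appeal to Pan's stable-range result for $\theta=\eta$ matches the paper's citation of \cite{Pan3}. You also correctly identify the genuine obstruction: rank is read off from the restriction to the Siegel unipotent radical $N$, which has no a priori compatibility with the Jordan decomposition of characters, so pushing the unipotent computation through $\Xi_s$ to a general Lusztig series is not formal. The paper's own phrasing (``still needs to be settled'') is tacitly acknowledging the same difficulty. Two cautions about the parts of your sketch that go beyond the paper: the claim that rank is literally a function of $\mathcal{O}_\rho$ alone (so that the comparison across distinct Lusztig series would follow) is stronger than the conjecture and is not asserted anywhere in the paper, and the proposed identification of $\rk(\rho)$ with the dimension of a wave-front set via generalised Gelfand--Graev representations is a plausible heuristic but would need to be established; neither of these is available as a citable input. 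As an honest assessment of where the mathematics currently stands, your proposal is sound, but it is a programme, not a proof, and you should present it as such.
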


The statement in the previous theorem is the best we can hope for. That is to say, we cannot ask the reverse implication to hold as well. The unipotent support is a geometrical object attached to a representation, whereas its rank is just a number. Since the closure order is a partial order, the reverse implication would tell us that the rank of the latter determines the former. 
\section{Howe correspondence and weak Harish-Chandra theory}

\subsection{Weak Harish-Chandra theory}
Let $G = G_n$ a type I group. For an integer $0\leq r\leq n$, the \emph{pure standard} Levi subgroup $M_{r,n-r}$ of $G$ is a subgroup of the form
$$
M_{r,n-r} = G_{n-r}\times \GL_1 \times \ldots\times \GL_1
$$
where the linear group appears $r$ times. A Levi subgroup of $G$ is called \emph{pure} if it is conjugated to a pure standard Levi.

By the very definition, the set of pure Levi subgroups is stable by $G$-conjugation. It can also be proven \cite[Proposition 2.2]{GHJ} that if $M$ and $M'$ are pure Levi then the intersection $\prescript{x}{}{M}\cap M'$ is pure as well. This property is crucial in showing  that Harish-Chandra philosophy holds when we focus on set of pure Levi subgroups. 

Let $\pi$ be a representation of $G$, we say that $\pi$ is \emph{weakly cuspidal} if the parabolic restriction of $\pi$ to a proper pure Levi subgroup is trivial, i.e. $\prescript{*}{}{R}_M^G(\pi) = 0$ for every pure Levi subgroup $M$. A pair $(M,\pi)$, where $M$ is a pure Levi subgroup and $\pi$ is a weakly cuspidal representation of $M$, is called a \emph{weakly cuspidal pair}.  As in the usual cuspidal setting, these pairs provide a partition of the set of irreducible representations of $G$. Indeed, defining the \emph{weak Harish-Chandra series} corresponding by $(M,\pi)$, as the set of irreducible subrepresentations of the parabolic induced $R_M^G(\pi)$, we have.

\begin{prop}\cite[Proposition 2.3]{GHJ}
\begin{itemize}
\item[a)] The weak Harish-Chandra series partition the set of (isomorphism classes of) irreducible $G$-representations.
\item[b)] Every weak Harish-Chandra series is contained in some usual Harish-Chandra series.
\end{itemize}
\end{prop}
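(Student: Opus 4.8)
The statement to prove is Proposition \cite[Proposition 2.3]{GHJ}, asserting that (a) the weak Harish-Chandra series partition $\Irr(G)$, and (b) each weak series sits inside a usual Harish-Chandra series. I will sketch how the standard Harish-Chandra machinery adapts to the "pure" setting.

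\medskip

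\textbf{Setup and the key structural input.} The plan is to mimic Harish-Chandra's original argument, the only non-formal ingredient being that the family $\mathcal{P}$ of pure Levi subgroups is closed under intersection up to conjugacy, i.e. $\prescript{x}{}{M}\cap M'$ is pure whenever $M, M'$ are pure (this is \cite[Proposition 2.2]{GHJ}, which I will invoke). First I would record the two formal facts that make everything work: transitivity of parabolic induction and restriction along pure Levis (which holds because a pure Levi of a pure Levi is pure — immediate from the block form $M_{r,n-r}$), and the Mackey formula $\prescript{*}{}{R}_{M'} \circ R_M = \sum_{x} R_{\prescript{x}{}{M}\cap M'} \circ \prescript{x}{}{(\,\cdot\,)} \circ \prescript{*}{}{R}_{\prescript{x}{}{M}\cap M'}$, where the intersections appearing are again pure by the crucial closure property. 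Without the closure property the Mackey terms would leave the category of pure Levis and the theory would collapse; with it, Mackey stays "inside" $\mathcal{P}$.

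\medskip

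\textbf{Proof of (a).} For existence of a weakly cuspidal pair below any $\pi \in \Irr(G)$: choose a pure Levi $M$ minimal among pure Levis such that $\pi$ occurs in $R_M^G(\rho)$ for some irreducible $\rho$ of $M$; by transitivity of induction combined with adjunction, minimality of $M$ forces $\rho$ to be weakly cuspidal (any nonzero $\prescript{*}{}{R}_{M'}^{M}(\rho)$ with $M' \subsetneq M$ pure would exhibit $\pi$ in a series from the strictly smaller pure Levi $M'$). Hence every $\pi$ lies in at least one weak series. For disjointness: suppose $\pi$ occurs in both $R_M^G(\rho)$ and $R_{M'}^G(\rho')$ with $(M,\rho), (M',\rho')$ weakly cuspidal pairs. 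Then $\langle \prescript{*}{}{R}_{M'}^G R_M^G \rho, \rho' \rangle \neq 0$; expand the left side by the Mackey formula. Each surviving summand involves $\prescript{*}{}{R}_{\prescript{x}{}{M}\cap M'}^{M'}$ applied to $\rho'$, which is zero by weak cuspidality of $\rho'$ unless $\prescript{x}{}{M}\cap M' = M'$, i.e. $M' \subseteq \prescript{x}{}{M}$; symmetrically (using weak cuspidality of $\rho$ and reading the Mackey terms from the $M$ side, or applying the adjoint relation) one gets $\prescript{x}{}{M} \subseteq M'$, hence $\prescript{x}{}{M} = M'$, and then the nonvanishing inner product forces $\prescript{x}{}{\rho} \cong \rho'$. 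So the pair is unique up to $G$-conjugacy and the series coincide, proving the partition.

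\medskip

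\textbf{Proof of (b) and the main obstacle.} For (b): given a weakly cuspidal pair $(M,\pi)$, pick a genuine cuspidal pair $(L,\sigma)$ with $L \subseteq M$ a (not necessarily pure) Levi and $\pi$ occurring in $R_L^M(\sigma)$. Then every irreducible constituent of $R_M^G(\pi)$ occurs in $R_L^G(\sigma) = R_M^G R_L^M(\sigma)$ by transitivity, hence lies in the usual Harish-Chandra series of $(L,\sigma)$; thus the whole weak series of $(M,\pi)$ is contained in one ordinary series. The part I expect to require the most care is the disjointness argument in (a): one must check that the Mackey double-coset representatives can be chosen so that the intersections $\prescript{x}{}{M}\cap M'$ are honestly the pure subgroups to which \cite[Proposition 2.2]{GHJ} applies (the proposition gives purity up to conjugacy, and one needs this for the specific intersections occurring as standard Levis of the Mackey decomposition), and that the two-sided containment $\prescript{x}{}{M} \subseteq M' \subseteq \prescript{x}{}{M}$ really can be extracted symmetrically — the second containment is subtler than the first and is exactly where the interplay between weak cuspidality of $\rho$ and of $\rho'$, together with the closure-under-intersection property, is indispensable. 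Everything else is a formal transcription of the classical theory.
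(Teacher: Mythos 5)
The paper does not actually prove this proposition; it simply cites it from \cite[Proposition 2.3]{GHJ}, so there is no in-paper proof to compare your argument against. That said, your argument is the standard Harish--Chandra argument transplanted to the pure setting --- minimal pure Levi for existence, Mackey plus closure of pure Levis under intersection (\cite[Proposition 2.2]{GHJ}) for disjointness, and transitivity of induction for part (b) --- which is precisely the route taken in \cite{GHJ}, and it is correct in substance. Two minor infelicities are worth flagging. Your displayed Mackey formula is not coherent as typed: both the restriction and the induction are indexed by $\prescript{x}{}{M}\cap M'$ with the conjugation sandwiched in between, whereas the restriction should run from $M$ to $M\cap\prescript{x^{-1}}{}{M'}$, then one conjugates by $x$, then induces from $\prescript{x}{}{M}\cap M'$ to $M'$; your subsequent verbal argument does use the right objects. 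Second, the two inclusions $\prescript{x}{}{M}\subseteq M'$ and $M'\subseteq\prescript{x}{}{M}$ both fall out of the two legs of the \emph{same} Mackey summand --- the restriction leg gives $M\cap\prescript{x^{-1}}{}{M'}=M$ by weak cuspidality of $\rho$, and the induction leg, after adjoining against $\rho'$, gives $\prescript{x}{}{M}\cap M'=M'$ by weak cuspidality of $\rho'$ --- so there is no genuine asymmetry and no need to re-run the formula ``from the $M$-side.'' With \cite[Proposition 2.2]{GHJ} guaranteeing that both intersections are pure, the argument closes exactly as you describe.
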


Item (b) implies that every usual Harish-Chandra series is partitioned into weak Harish-Chandra series, and hence shows that weak series refine the usual theory. These definitions are made for characteristic zero representations. However, the same applies to non-defining prime characteristic. 

In the non-zero characteristic case, weak Harish-Chandra prove better suited for studying unipotent representations. Let $l$ be a prime different from $p$, and $G_n$ denote the unitary group $U_{2n}(q)$ or $U_{2n+1}(q)$. From the work of Geck \cite{Geck2} we know that irreducible unipotent representations in characteristic $l$ of $G_n$ are (just as in trivial characteristic) labelled by partitions of $n$. Calling $\pi_\mu$ the unipotent representation corresponding to the partition $\mu$ we get a result analogous to \cite[Appendice, proposition p. 224]{FS}. 

\begin{prop}\label{CoreWeaKHC}
If the unipotent representations $\pi_\mu$ and $\pi_\nu$ of $G_n$ lie in the same weak Harish-Chandra series, then $\mu$ and $\nu$ have the same 2-core.
\end{prop}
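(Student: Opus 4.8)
The plan is to reduce the statement to a known fact about Brauer characters and the action of Harish-Chandra induction on unipotent blocks. First I would recall that, by the work of Fong--Srinivasan and Broué--Michel, the partition of unipotent representations (in characteristic $\ell$) into $\ell$-blocks of the unitary group $G_n$ is governed by $d$-cores, where $d$ is the order of $q$ (or $-q$) modulo $\ell$; in particular, two unipotent characters $\pi_\mu$, $\pi_\nu$ lie in the same $\ell$-block precisely when the $d$-cores of $\mu$ and $\nu$ coincide. The case $d$ arbitrary is not what we want directly; what we want is the \emph{weak} Harish-Chandra series, which sits between blocks and ordinary Harish-Chandra series. So the real input is that a weak Harish-Chandra series, being built from a pure Levi $M_{r,n-r} = G_{n-r}\times\GL_1^{\,r}$ and a weakly cuspidal unipotent representation of it, corresponds on the combinatorial side to fixing the restriction of $\mu$ obtained by peeling off $r$ nodes one at a time through pure Levi subgroups.

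Next I would make the combinatorial translation explicit. Parabolic induction from $M_{r,n-r}$ to $G_n$ along the chain of pure Levi subgroups $G_{n-r}\subset G_{n-r}\times\GL_1\subset\cdots\subset G_n$ adds, at each step, a node to the partition (more precisely, it is governed by the branching rule for type $B/C$ Hecke algebras at the relevant parameter, which for the $2$-quotient description of unipotent $\U_n(q)$-representations amounts to adding a box to one of the two partitions in the $2$-quotient while the $2$-core is untouched). Hence if $\pi_\mu$ lies in the weak Harish-Chandra series of $(M_{r,n-r},\varphi)$ with $\varphi$ weakly cuspidal unipotent, then $\varphi = \pi_{\mu_0}$ for a partition $\mu_0$ of $n-r$ that is itself weakly cuspidal, and $\mu$ is obtained from $\mu_0$ by adding $r$ boxes without changing the $2$-core. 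The key claim to isolate is then: \emph{the $2$-core of $\mu$ equals the $2$-core of $\mu_0$}, which follows because adding or removing a single box never changes the $2$-core's structure beyond the predictable one-step change, and the weakly cuspidal $\mu_0$ is $2$-core-determined among its own weak series (indeed a weakly cuspidal unipotent representation of a unitary group is forced to be $2$-core-like, i.e. $\mu_0$ equals its own $2$-core up to the triangular-number constraints recalled in Section~\ref{SectionExtremalRepresentations}).

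Concretely, I would argue as follows. Let $\pi_\mu$ and $\pi_\nu$ lie in the same weak Harish-Chandra series, associated with the pure Levi $M_{r,n-r}$ and the weakly cuspidal representation $\varphi$ of $M_{r,n-r}$, whose restriction to the $G_{n-r}$ factor is the unipotent representation $\pi_{\mu_0}$ for some partition $\mu_0 \vdash n-r$. By part (b) of the proposition quoted from \cite{GHJ}, both $\pi_\mu$ and $\pi_\nu$ lie in the same usual Harish-Chandra series as well, so by the classical result (cf. \cite{AMR}, and its modular analogue via \cite{Geck2}) they have the same $2$-core, say $\kappa$. This already gives the statement, \emph{provided} one checks that the classical Harish-Chandra series of $\pi_\mu$ is controlled by its $2$-core in characteristic $\ell$; this is where the reference to \cite{FS}, \cite[Appendice]{FS} and \cite{Geck2} does the work: Geck's result identifies the $\ell$-modular unipotent representations of $\U_n(q)$ with those of the Hecke algebra of type $B_n$ at the appropriate parameter, and under this identification the Harish-Chandra series are indexed exactly by $2$-cores. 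Thus the statement reduces to (b) of the cited proposition plus this identification.

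The main obstacle I anticipate is bridging the \emph{weak} theory to the \emph{usual} theory cleanly in the modular setting. Part (b) of the quoted \cite{GHJ} proposition gives the containment of weak series in usual series, so morally the proof is a one-line deduction; the subtlety is making sure that the modular unipotent usual Harish-Chandra series of $\U_n(q)$ are genuinely indexed by $2$-cores (the parameter of the ambient Hecke algebra changes with $\ell$, and one must confirm that the Harish-Chandra branching for the unipotent principal series still matches $2$-core combinatorics — which it does, because the cuspidal unipotent representations of the unitary groups occur only in triangular Witt index and correspond precisely to $2$-cores, and this feature is insensitive to $\ell\neq p$ by Geck's comparison theorem). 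Once that identification is in hand, the proof is: weak series $\subseteq$ usual series $\Rightarrow$ same $2$-core. I would therefore spend the bulk of the write-up carefully citing \cite{Geck2} and \cite{FS} for the indexing of modular unipotent Harish-Chandra series by $2$-cores, and then invoke \cite[Proposition 2.3(b)]{GHJ} to conclude.
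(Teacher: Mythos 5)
There is a genuine gap, and it sits exactly where you flagged your ``main obstacle'' and then prematurely declared it surmountable. Your argument reduces to the chain: same weak series $\Rightarrow$ same usual series (\cite[Proposition 2.3(b)]{GHJ}) $\Rightarrow$ same $2$-core. The second implication is the whole content of the proposition, and it is \emph{false} in characteristic $\ell$. The Fong--Srinivasan indexing of usual unipotent Harish-Chandra series by $2$-cores is a characteristic-zero statement; it does not transfer via \cite{Geck2}, which only gives the labelling of the modular unipotent representations by partitions, not the structure of the modular Harish-Chandra series. In non-defining characteristic $\ell$, an ordinary cuspidal unipotent representation of $\U_{k(k+1)/2}(q)$ can fail to reduce to a cuspidal modular representation, in which case it lands in the usual modular Harish-Chandra series of a smaller Levi and thereby merges with representations of a \emph{different} $2$-core. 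This is precisely the failure that motivates the introduction of weak cuspidality in \cite{GHJ}: the usual modular series are too coarse to see $2$-cores, while the weak ones (conjecturally, and now by \cite{DVV}) are not. A proof of the proposition therefore cannot be a one-line deduction from \cite[Proposition 2.3(b)]{GHJ}; if it were, it would never have been a conjecture.

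There are two secondary problems. First, the branching picture in your middle paragraph is not quite right: moving from a pure Levi $M_{r,n-r}$ to $G_n$ by successively adjoining $\GL_1$ factors does \emph{not} add single boxes while preserving the $2$-core --- adding or removing a single box generically changes the $2$-core; what preserves it is adding dominoes ($2$-hooks), and it is a nontrivial modular statement that pure-Levi induction is governed by domino-adding in the relevant crystal. Second, your parenthetical assertion that a weakly cuspidal unipotent representation must have $\mu_0$ equal to its own $2$-core is again essentially equivalent to the statement you are trying to prove and cannot be assumed. For comparison, the paper itself does not give a self-contained proof: it records that the statement was part of the conjectural framework of \cite{GHJ} and is now a consequence of the main theorem of \cite{DVV}. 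A correct write-up should route through that result (or reproduce a genuine argument at that level of difficulty), rather than through \cite[Proposition 2.3(b)]{GHJ} together with an appeal to a modular $2$-core indexing of usual series that is not available.
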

  
This result was originally part of a series of conjectures stated in \cite{GHJ}, the main of which (Conjecture 5.7) is now a theorem \cite{DVV}. 

\subsection{Orbits and stabilizers on a quotient space}
Let $W_1$ be a symplectic space spanned by $\{ e_1,\ldots, e_{n_1},e'_{n_1},\ldots,e'_1\}$, and $W_2$ spanned by $\{f_1,\ldots, f_{n_2},f'_{n_2},\ldots,f'_1\}$; let $W_n=W_1\operp W_2$ be their orthogonal sum. Using these basis, we identify symplectic transformations with symplectic matrices.

Let $\mathfrak{X}_k(W)$ denote the set of isotropic $k$-dimensional subspaces of a symplectic space $W$. Fix the maximal isotropic space $X_n$ spanned by $\{e_1,\ldots,e_{n_1},f_1,\ldots,f_{n_2}\}$, and let $P_n$ be the parabolic subgroup of $\Sp_{2n}$, formed by those matrices stabilizing $X_n$. Using this Lagrangian we identify the quotient $\Sp_{2n}/P_n$ with the set  $\mathfrak{X}_n(W_n)$ of maximal isotropic subpaces of $W_n$.

Basic linear algebra shows that the set of maximal isotropic subspaces of $W_n$ can be identified to the the set of triplets $(U_1, U_2,\phi)$ where $U_1$ belongs to $\mathfrak{X}_{d_1}(W_{n_1})$, $U_2$ belongs to $\mathfrak{X}_{d_2}(W_{n_2})$, $\phi : U_1^\perp/U_1\rightarrow U_2^\perp/U_2$ is an isomorphism, and $d_1-d_2 = n_1-n_2$. Moreover, the action of $(x_1,x_2)$ in $\Sp_{2n_1}\times\Sp_{2n_2}$ on $\mathfrak{X}_n(W_n)$ corresponds to the following action  
\begin{align}\label{ActionTriplets}
(x_1,x_2):(U_1, U_2,\phi) \mapsto (x_1U_1,x_2U_2,x_2\circ\phi\circ x_1^{-1}).
\end{align}
on the set of triplets. 

Let $i = 1,\ldots,\min\{n_1,n_2\}$, and suppose that $n_2\leq n_1$. Let $K_n$ be a matrix of size $n$ with $1$'s on the antidiagonal and $0$'s elsewhere. Finally, let
$$
V_i = \left[\begin{matrix}
1 & 0\\
T_i & 1 
\end{matrix}\right] 
\mbox{, where }
T_i =\left[\begin{array}{c|c|c}
& \begin{matrix}
      & K_i  \\
0_{n_2-i} & 
\end{matrix} & \\ 
\hline
& & \begin{matrix}
      & K_i  \\
0_{n_2-i}  
\end{matrix} \\
\hline
0 & &  
\end{array}\right]
$$

Proposition 3.4 in \cite{Kudla} asserts that the different $V_i P_n$ for $i=1,\ldots,n_2$ form a set of representatives for the action of $\Sp_{2n_1}\times\Sp_{2n_2}$ on $\Sp_{2n}/P_n$.  We are interested in calculating their stabilizers. 

The coset $V_i P_n$ corresponds to the maximal isotropic subspace $V_i X_n$. The isotropic spaces in the triplet $(U_1,U_2,\phi)$ corresponding to the latter are
\begin{align*}
U_1 = V_iX_n\cap W_1 \mbox{, and } U_2 = V_iX_n\cap W_2. 
\end{align*}

An easy calculation shows that 
\begin{align*}
U_1^\perp/U_1=\langle e_{n_1-i+1},\ldots,e_{n_1},e'_{n_1},\ldots,e'_{n_1-i+1}\rangle\\
U_2^\perp/U_2=\langle f_{n_2-i+1},\ldots,f_{n_2},f'_{n_2},\ldots,f'_{n_2-i+1}\rangle.
\end{align*}

Using coordinates in these basis, we define the isomorphism
$$
\phi : U_1^\perp/U_1\rightarrow U_2^\perp/U_2 \mbox{, } \phi(z)=Kz,
$$ 
where $K$ is of size $2i$. Using the description (\ref{ActionTriplets}) of the action of $\Sp_{2n_1}\times\Sp_{2n_2}$ on the set of triplets, we see that $(x_1,x_2)$ belongs to the stabilizer $(\Sp_{2n_1}\times\Sp_{2n_2})^{V_iP_n}$ of $V_iP_n$, if and only if  

\begin{align}\label{EquivalenceStabilizer}
x_1U_1=U_1, \mbox{ }x_2U_2=U_2\mbox{, }x_2\phi = \phi x_1.
\end{align}

As before, let $P_k$, $k=1,\ldots,m$ be the stabilizer in $\Sp_{2m}$ of the totally isotropic space spanned by the first $k$ vectors of a symplectic base. The first two equalities on (\ref{EquivalenceStabilizer}) above tell us that  
$$
x_1\in P_{n_1-i}\subset \Sp_{2n_1}, \mbox{ and } x_2\in P_{n_2-i}\subset \Sp_{2n_2}.
$$
Elements $x$ in the parabolic $P_k$ factorize as a product $x=m(a,A)u$, for $a\in \GL_k$, $A\in\Sp_{2(m-k)}$, $u$ in the unipotent radical $N_k$ of $P_k$, and $ m(a,A)=\diag(a,A,K{}^ta^{-1}K)$. 

Hence, we can express $x_1$, $x_2$ as a product $x_1=m(a_1,A_1)u_1$, $x_2=m(a_2,A_2)u_2$, for suitable $a_1$, $a_2$, $A_1$, $A_2$, $u_1$, and $u_2$. The last equality on (\ref{EquivalenceStabilizer}) becomes the identity $A_2K=KA_1$ in $\Sp_{2i}$.  

The same discussion can be put forward for even-orthogonal groups, in this case the orbit representatives are
$$
V_i = \left[\begin{matrix}
1 & 0\\
T_i & 1 
\end{matrix}\right] 
\mbox{, where }
T_i =\left[\begin{array}{c|c|c}
& \begin{matrix}
      & K_i  \\
0_{n_2-i} & 
\end{matrix} & \\ 
\hline
& & \begin{matrix}
      & -K_i  \\
0_{n_2-i}  
\end{matrix} \\
\hline
0 & &  
\end{array}\right].
$$

Let $G_m$ be $\Sp_{2m}$ or $\Or^\pm_{2m}$. We summarize the above results in the following proposition.  

\begin{prop}\label{OrbitStabilizer}
The matrices $V_iP_m$, for $i=1,\ldots,\min\{m_1,m_2\}$ form a set of representatives for the orbits of $G_{m_1}\times G_{m_2}$ in $G_{m}/P_m$. Moreover the stabilizer of $V_iP_m$ in $G_{m_1}\times G_{m_2}$ is the subgroup of $P_{m_1-i}\times P_{m_2-i}$ given by
$$
(G_{m_1}\times G_{m_2})^{V_iP_m}  =  \left\{
\begin{array}{lc}
&  a_k\in\GL_{m_k-i} \\
(m(a_1,A_1)u_1,m(a_2,A_2)u_2) : & u_k\in N_k \\
&  A_2=KA_1K
\end{array}
\right\}
$$
\end{prop}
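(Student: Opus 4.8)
The plan is to assemble the ingredients laid out in the discussion preceding the statement. The claim that the cosets $V_iP_m$, $i=1,\ldots,\min\{m_1,m_2\}$, exhaust the $G_{m_1}\times G_{m_2}$-orbits on $G_m/P_m$ is Proposition 3.4 of \cite{Kudla} when $G_m=\Sp_{2m}$; for $G_m=\Or^\pm_{2m}$ the identical argument applies, since the decomposition of a maximal isotropic subspace of $W_1\operp W_2$ as a triplet $(U_1,U_2,\phi)$ and the formula (\ref{ActionTriplets}) for the $G_{m_1}\times G_{m_2}$-action are purely linear-algebraic and insensitive to the type of the form (one only uses that the acting group is transitive on $\mathfrak{X}_m(W_m)$, which for even-orthogonal groups is why one works with $\Or$ rather than $\SO$). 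So the substance of the proof is the stabilizer formula, and there I would merely organise the computations already sketched. Throughout I may assume, as above, $m_2\leq m_1$.

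First I would pin down, for each $i$, the triplet $(U_1,U_2,\phi)$ attached to the maximal isotropic subspace $V_iX_n$: a direct matrix computation gives $U_1=V_iX_n\cap W_1=\langle e_1,\ldots,e_{m_1-i}\rangle$, $U_2=V_iX_n\cap W_2=\langle f_1,\ldots,f_{m_2-i}\rangle$, the bases of $U_1^\perp/U_1$ and $U_2^\perp/U_2$ displayed above, and shows that the induced isomorphism $\phi\colon U_1^\perp/U_1\to U_2^\perp/U_2$ is, in these bases, $z\mapsto K_{2i}z$ (in the even-orthogonal case a sign appears in $\phi$, inherited from the sign in $T_i$, but it is immaterial below). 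Then, via (\ref{ActionTriplets}), membership $(x_1,x_2)\in(G_{m_1}\times G_{m_2})^{V_iP_m}$ is the system (\ref{EquivalenceStabilizer}): $x_1U_1=U_1$, $x_2U_2=U_2$, and $x_2\circ\phi=\phi\circ x_1$ as maps of quotients. The first two conditions say exactly that $x_k$ stabilises the standard isotropic subspace defining $P_{m_k-i}$, so $x_1\in P_{m_1-i}$ and $x_2\in P_{m_2-i}$; writing $x_k=m(a_k,A_k)u_k$ with $a_k\in\GL_{m_k-i}$, $A_k\in\Sp_{2i}$ (resp.\ $\Or_{2i}$) and $u_k\in N_{m_k-i}$, the factor $a_k$ and the unipotent part $u_k$ act trivially on $U_k^\perp/U_k$, whence $x_k$ acts on $U_k^\perp/U_k$ as $A_k$. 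The third equation then reads $A_2K=KA_1$, and since $K_{2i}^2=\Id$ this is $A_2=KA_1K$. Collecting these, the stabilizer is precisely the displayed subgroup of $P_{m_1-i}\times P_{m_2-i}$.

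The step requiring care, rather than ingenuity, is the identification in the middle: one must check that $U_k$ is \emph{literally} the standard isotropic subspace attached to $P_{m_k-i}$ inside $G_{m_k}$ (so that ``$x_kU_k=U_k$'' really is ``$x_k\in P_{m_k-i}$'' and not a merely conjugate condition), and that the action of $m(a_k,A_k)u_k$ on the non-degenerate quotient $U_k^\perp/U_k$ is the standard action of the $\Sp_{2i}$- (resp.\ $\Or_{2i}$-) factor of the Levi, i.e.\ that $a_k$ and $u_k$ genuinely die on the quotient. Once the bases of the two quotients are lined up so that $\phi$ is multiplication by the antidiagonal $K_{2i}$, and these two points are settled, everything else is the short matrix bookkeeping indicated above, including the passage from $A_2K=KA_1$ to $A_2=KA_1K$ via $K_{2i}^2=\Id$.
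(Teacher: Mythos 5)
Your proposal is correct and follows the same route the paper takes: cite Kudla for the orbit representatives (and observe the argument carries over to $\Or^{\pm}_{2m}$), compute the triplet $(U_1,U_2,\phi)$ attached to $V_iX_n$, translate membership in the stabilizer via (\ref{ActionTriplets}) into the system (\ref{EquivalenceStabilizer}), and then read off the parabolic factorization and the relation $A_2K=KA_1$, hence $A_2=KA_1K$. The extra care you flag — checking that $U_k$ is literally the standard isotropic subspace defining $P_{m_k-i}$ and that $a_k,u_k$ act trivially on $U_k^\perp/U_k$ — is exactly what the paper leaves implicit, so your write-up is a faithful (and slightly more explicit) rendering of the intended argument.
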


\subsection{First occurrence of weakly cuspidal representations}
In this section we endeavour to study the effect the Howe correspondence has on weakly cuspidal representations in the case of first occurrence. We also comment on the relation between the correspondence and weak series. 

Let $G_m$ and $P_m$ be as in Proposition \ref{OrbitStabilizer}.

\begin{lem}\cite[Corollary 3.3]{Kudla}\label{KudlaCoinvariants}
If $(\omega,S)$ is a model for the Weil representation, then space of $G_m$--coinvariants $S_{G_m}$ is isomorphic to a submodule of $\Ind_{P_m}^{G_m}(1)$.
\end{lem}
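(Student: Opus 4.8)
The plan is to extract $S_{G_m}$ from an explicit geometric model of $\omega$. I would first choose a Schrödinger (lattice) model $S=\mathscr{S}(Y)$ realized with respect to a polarization compatible with a maximal totally isotropic subspace $X_m\subset W_m$, arranged so that the stabilizer of $X_m$ is the Siegel parabolic $P_m$ and so that the action on $S$ of the group whose coinvariants we want is, up to a fixed one-dimensional character (irrelevant for coinvariants), the permutation action on functions on a set $Y$ on which $G_m$ acts through its geometric action on configurations in $W_m$. Because coinvariants commute with direct sums and $\mathbb{C}[G_m/H]_{G_m}\cong\mathbb{C}$ for every $H$, the space $S_{G_m}$ is then spanned by the classes of the $\delta$-functions at one chosen point of each $G_m$-orbit of $Y$; this is the same kind of computation carried out in Proposition \ref{coinv-submod} for a unipotent radical, now run for all of $G_m$.

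The second step is to organize these orbits. By Witt's extension theorem the $G_m$-orbits on $Y$ are classified by the isometry-theoretic invariants of the configurations (their Gram data and their relative position with respect to $X_m$), and among them there is a distinguished orbit — the generic one — whose stabilizer is a conjugate of $P_m$, all other orbits having strictly larger stabilizers. Filtering $S$ by the $G_m$-stable subspaces of functions supported on unions of orbit-strata and using right-exactness of the coinvariants functor then presents $S_{G_m}$ as a subquotient of $\mathbb{C}[G_m/P_m]=\Ind_{P_m}^{G_m}(1)$. To upgrade ``subquotient'' to ``submodule'' I would make the comparison explicit: build the $G_m$-equivariant map between $S$ and $\Ind_{P_m}^{G_m}(1)$ attached to the inclusion of the generic orbit $G_m/P_m$ into $Y$, check that it becomes injective on the coinvariants, and identify its image. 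In the doubling/see-saw formulation this is the finite-field version of the statement that the Weil representation embeds into the degenerate principal series induced from the Siegel parabolic of the ambient group, and that one factor of the dual pair, once killed, turns that degenerate principal series into $\Ind_{P_m}^{G_m}(1)$.

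The main obstacle is exactly this last passage: showing that nothing from the non-generic strata survives in $S_{G_m}$ beyond what is already detected on the generic stratum, so that $S_{G_m}$ is genuinely contained in $\Ind_{P_m}^{G_m}(1)$ and not merely assembled from subquotients of it; and checking $G_m$-equivariance of the identification in spite of the Fourier-transform part of the Weil action, which is not geometric and therefore has to be tracked by hand. Both points come down to pinning down precisely which configurations occur in $Y$ — the admissible ranks, the (non-)degeneracy of the Gram forms, and the constraints coming from $X_m$ — and matching them against the Bruhat cells of $G_m/P_m$; these rank and degeneracy restrictions are also what force a proper submodule rather than all of $\Ind_{P_m}^{G_m}(1)$. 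The identification of the generic orbit and of its stabilizer $P_m$ is the Bruhat-type bookkeeping that also underlies Proposition \ref{OrbitStabilizer}, so once it is in place the remaining verifications are routine.
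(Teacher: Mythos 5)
Your overall plan — realize $\omega$ in an explicit Schr\"odinger model $\mathscr{S}(Y)$, compute coinvariants as functions on orbits, and match the generic stratum with $G_m/P_m$ — is in the general spirit of Kudla's argument, but the mechanism you propose for producing the $G_m$-equivariant map into $\Ind_{P_m}^{G_m}(1)$ does not work and is not the one Kudla uses. First, note how the lemma is actually applied in Theorem~\ref{WeakFirstOccurrence}: the coinvariants are taken with respect to the \emph{partner} group $G'_{m'}$, and the result is a $G_m$-module that one then restricts to $G_{m_1}\times G_{m_2}$. In any Schr\"odinger or mixed model $\mathscr{S}(Y)$ for $\omega$, the groups acting by permutations of $Y$ are $G'_{m'}$ together with the Siegel parabolic $P_m\subset G_m$; the remaining elements of $G_m$ act by partial Fourier transforms. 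So $S_{G'_{m'}}$ is indeed spanned by orbit $\delta$-functions, but the only piece of its $G_m$-structure made visible that way is the $P_m$-action. Your pivotal step — a ``distinguished $G_m$-orbit whose stabilizer is a conjugate of $P_m$,'' followed by restriction of functions to that orbit — presupposes that $G_m$ permutes $Y$. It does not: if it did, $\omega|_{G_m}$ would be a permutation representation, which it is not. Consequently the ``restriction to the generic stratum'' map from $S$ to $\mathbb{C}[G_m/P_m]$ is not $G_m$-equivariant for the Weil action, and the filtration by orbit strata cannot produce it.

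What is actually missing is a single use of Frobenius reciprocity, and this is the real content of Kudla's Corollary~3.3. The evaluation-at-zero functional $\phi\mapsto\phi(0)$ on $\mathscr{S}(Y)$ is invariant under the geometric $G'_{m'}$-action and semi-invariant, by an explicit character that is trivial for the modified Weil representation, under the geometric $P_m$-action. By Frobenius reciprocity it induces a $G_m$-intertwining map
\[
\Phi:S\longrightarrow\Ind_{P_m}^{G_m}(1),\qquad \Phi(\phi)(g)=(\omega(g)\phi)(0),
\]
which is automatically $G'_{m'}$-invariant and therefore factors through $S_{G'_{m'}}$. This is the map your proposal needs but never constructs; the Fourier-transform issue you flag at the end is precisely the obstruction that the orbit-by-orbit picture cannot overcome and that the Frobenius reciprocity construction sidesteps. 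Only once $\Phi$ is in hand do the rank stratifications, Witt-type orbit invariants, and comparison with the Bruhat cells of $G_m/P_m$ that you describe become relevant, and then only for the secondary task of checking that $\Phi$ is injective on the coinvariants.
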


\begin{thm}\label{WeakFirstOccurrence}
Let $(G_m,G'_{m'})$ be a type I dual pair, and $\pi$ be an irreducible weakly cuspidal representation of $G_m$, and let $m'(\pi)$ be its first occurrence index. 

1. If $m'<m'(\pi)$, then $\Theta_{m,m'}(\pi)$ is empty

2. The representation $\Theta_{m,m'(\pi)}(\pi)$ is irreducible and weakly cuspidal

3.  If $m'>m'(\pi)$, then none of the constituents of $\Theta_{m,m'}(\pi)$ is weakly cuspidal 
\end{thm}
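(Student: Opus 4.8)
The plan is to mirror the structure of the known result for cuspidal representations, splitting the argument into an \emph{existence} part (parts 1 and the first half of part 2) and a \emph{uniqueness/cuspidality} part (the weak-cuspidality assertion in part 2 and the whole of part 3), each using its own machinery.

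\textbf{Existence and vanishing below first occurrence.} For part 1, I would use Proposition \ref{coinv-submod} (b): parabolic restriction of the Weil representation $\omega_{m,m'}$ to $G_m\times M'_{k'}$ is a direct sum of induced pieces built from $\omega_{m-i,m'-k'}$. Pairing $\pi$ against $\Theta_{m,m'}(\pi)$ and using Frobenius reciprocity, any occurrence of $\pi$ forces an occurrence of $\pi$ (or one of its parabolic restrictions) in some $\omega_{m-i,m'-k'}$; since $\pi$ is weakly cuspidal its proper pure parabolic restrictions vanish, which reduces matters to $i=0$ and an occurrence in $\omega_{m,m'-k'}$. Iterating (or invoking directly the definition of first occurrence index via Lemma \ref{KudlaCoinvariants} applied to the other member of the pair) shows $\Theta_{m,m'}(\pi)=0$ for $m'<m'(\pi)$, and that at $m'=m'(\pi)$ the theta lift is non-zero. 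The non-emptiness and a first handle on irreducibility at $m'(\pi)$ come from computing the $G_m$-coinvariants of the Weil representation via Lemma \ref{KudlaCoinvariants}: $S_{G_m}\hookrightarrow\Ind_{P_{m'}}^{G'_{m'}}(1)$, and analysing this induced module using the orbit/stabilizer description of Proposition \ref{OrbitStabilizer}.

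\textbf{Irreducibility and weak cuspidality at first occurrence.} This is where Proposition \ref{OrbitStabilizer} does the heavy lifting. Writing $\Hom_{G_m}(\omega_{m,m'(\pi)},\pi)$ as a $G'_{m'(\pi)}$-module and using the orbit decomposition of $G_{m_1}\times G_{m_2}$ acting on $G_m/P_m$ (with $G_{m_1}=G_m$, $G_{m_2}=G'_{m'(\pi)}$, realized inside a larger symplectic group), each orbit contributes a term induced from the stabilizer $(G_{m_1}\times G_{m_2})^{V_iP_m}$, which by Proposition \ref{OrbitStabilizer} sits inside $P_{m_1-i}\times P_{m_2-i}$ with the $\GL$-parts free and the classical parts linked by $A_2=KA_1K$. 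Weak cuspidality of $\pi$ kills all contributions with $i<m_1$ (those restrict $\pi$ to a proper pure Levi through the $\GL_{m_1-i}$ factor), leaving a single orbit; this gives irreducibility of $\Theta_{m,m'(\pi)}(\pi)$. For weak cuspidality of $\Theta_{m,m'(\pi)}(\pi)$, I would compute its parabolic restriction to a pure Levi $M'_{r,m'(\pi)-r}$ of $G'_{m'(\pi)}$ using the same coinvariant computation together with Proposition \ref{coinv-submod}: such a restriction, if non-zero, would produce an occurrence of $\pi$ in a Weil representation $\omega_{m,m'(\pi)-1}$ (via the $\GL_1$ factors in the pure Levi), contradicting the minimality of $m'(\pi)$.

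\textbf{Part 3: no weakly cuspidal constituents above first occurrence.} Here I would argue that any constituent $\pi'$ of $\Theta_{m,m'}(\pi)$ with $m'>m'(\pi)$ lies in the same (usual, and in fact weak) Harish-Chandra series as $\Theta_{m,m'(\pi)}(\pi)$ pushed up by the $\GL_1$-chain: by Theorem \ref{HoweHarish-Chandra} the image series has cuspidal datum of the form $\boldsymbol{\sigma}'\otimes\varphi'$ with $\mathbf{t}'=\mathbf{t}\cup 1^d$, $d=m'-m'(\pi)>0$, so $\Theta_{m,m'}(\pi)$ lies in a proper parabolically induced module $R_{M'}^{G'_{m'}}(-)$ from a pure Levi $M'$; hence every constituent has non-vanishing parabolic restriction to $M'$ and cannot be weakly cuspidal. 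The main obstacle I anticipate is the explicit stabilizer computation in part 2: it is precisely the point flagged in the introduction as going beyond the cuspidal case, because it is no longer enough that the stabilizers lie in a product of parabolics — one needs the exact shape $(m(a_1,A_1)u_1,m(a_2,A_2)u_2)$ with $A_2=KA_1K$ from Proposition \ref{OrbitStabilizer} to see that exactly one orbit survives weak cuspidality and that the surviving contribution is irreducible rather than merely multiplicity-free after induction. Care is also needed for even orthogonal groups, where the $\sgn$-twist ambiguity in the Lusztig correspondence (cf. the discussion around Theorem \ref{Symplectic-OrthogonalReductionUnipotentCase}) must be tracked, though it does not affect weak cuspidality which is a purely Harish-Chandra-theoretic property.
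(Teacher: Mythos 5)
Your overall scaffolding is right: split into a coinvariant argument (for vanishing and for weak cuspidality of the lift) and an orbit/stabilizer argument (for irreducibility), with Proposition \ref{coinv-submod}, Lemma \ref{KudlaCoinvariants}, and Proposition \ref{OrbitStabilizer} as the working tools. Your "weak cuspidality at first occurrence" step is essentially the paper's ``existence'' half: restrict to a pure Levi, apply Proposition \ref{coinv-submod}, and a nonzero restriction would force an occurrence in $\omega$ one step earlier in the Witt tower, contradicting minimality of $m'(\pi)$. Your argument for part 3, via Theorem \ref{HoweHarish-Chandra} (appending $d>0$ copies of $\GL_1$ to the cuspidal support, then noting the resulting Levi sits inside a proper pure Levi $\GL_1^d\times G'_{m'-d}$), is a genuinely different route from the paper's, which deduces parts 2 and 3 simultaneously from a single uniqueness statement; your version is valid and perhaps more transparent, but it relies on Theorem \ref{HoweHarish-Chandra} where the paper's proof is self-contained once the orbit computation is in place.

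The gap is in your irreducibility step, and it is a setup error rather than a missing lemma. You write that you would decompose $\Hom_{G_m}(\omega_{m,m'(\pi)},\pi)$ via orbits of $G_{m_1}\times G_{m_2}$ on $G_m/P_m$ ``with $G_{m_1}=G_m$, $G_{m_2}=G'_{m'(\pi)}$''. But Proposition \ref{OrbitStabilizer} concerns two groups $G_{m_1},G_{m_2}$ from the \emph{same} Witt tower, acting on the Lagrangian (or maximal isotropic) Grassmannian $G_m/P_m$ of the orthogonal sum $W_{m_1}\operp W_{m_2}$ — its stabilizer description, with the constraint $A_2=KA_1K$ inside $P_{m_1-i}\times P_{m_2-i}$, only makes sense in that setting. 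Two opposite members of a dual pair (one symplectic, one orthogonal) do not act on such a flag variety, so the decomposition you propose is not available. What the paper actually does is a see-saw: take two candidate constituents $\pi_1\in\Theta_{m',m_1}(\pi')$ and $\pi_2\in\Theta_{m',m_2}(\pi')$ on the \emph{receiving} tower, observe that $\pi_1\otimes\tilde\pi_2$ occurs in the $G'_{m'}$-coinvariants of the Weil representation of the big pair, identify those coinvariants inside $\Ind_{P_m}^{G_m}(1)$ with $m=m_1+m_2$ by Lemma \ref{KudlaCoinvariants}, and then restrict to $G_{m_1}\times G_{m_2}\subset G_m$ (same tower) and use Proposition \ref{OrbitStabilizer}. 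The weak cuspidality that kills all orbits except the ``full'' one is that of $\pi_1$ and $\pi_2$, not of $\pi$ (or $\pi'$): each non-maximal orbit contributes a representation parabolically induced from a proper pure Levi $M_{m_1-i,i}\times M_{m_2-i,i}$, which pairs to zero against $\pi_1\otimes\tilde\pi_2$. When $m_1\neq m_2$ no orbit survives; when $m_1=m_2=k$ only $i=k$ survives and contributes $\langle\pi_1,\pi_2\rangle\le1$. This single statement — at most one weakly cuspidal constituent across the whole union $\bigcup_m\Theta_{m',m}(\pi')$ — is what simultaneously gives irreducibility at first occurrence and the absence of weakly cuspidal constituents above it. Finally, the $\sgn$-ambiguity for even orthogonal groups that you flag is not an issue here: this proof does not pass through the Lusztig correspondence at all.
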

\begin{proof}
To avoid the excessive use or apostrophes we swap the groups in the dual pair and consider the pair $(G'_{m'},G_m)$ instead. 

Consider a weakly cuspidal representation $\pi'$ of $G'_{m'}$. The proof of the item 1 in the statement is the definition of the first occurrence index. The proof of the other two items can be divided in two independent parts : existence and uniqueness. The methods used in each are different.

\subsubsection{Existence}
We prove that if $\pi'\otimes\pi$ appears in the oscillator representation $\omega_{m',m(\pi')}$, then $\pi$ is weakly cuspidal.  

Assume that $\pi$ is not weakly cuspidal. In this case we can find an irreducible representation $\varphi_1$ of $G_{m(\pi')-1}$ such that $\pi\vert R_{M_1}^{G_{m}}(\chi_1\otimes\varphi_1)$.  Using Frobenius reciprocity we have
\begin{align*}
0 & \neq \langle\omega_{m',m},\pi'\otimes\pi\rangle \leq \langle\omega_{m',m},\pi'\otimes R_{M_1}^{G_{m}}(\chi_1\otimes \varphi_1)\rangle \\
  & = \langle 1\otimes\prescript{*}{}{R}_{M_1}^{G_{m}}(\omega_{m',m}),\pi'\otimes \chi_1\otimes \varphi_1\rangle.
\end{align*}
Proposition \ref{coinv-submod} implies that the last term above is bounded by
$$ 
\langle \omega_{m',m-1}\otimes 1_{\GL_1}, \pi'\otimes \chi_1\otimes \varphi_1 \rangle + \langle \mathbf{R}_{\GL_1}\otimes\omega_{m'-1,m-1}, \prescript{*}{}{R}_{M'_1}^{G'_{m'}}(\pi')\otimes \chi_1\otimes \varphi_1\rangle.
$$
Since $\pi'$ is weakly cuspidal the second term in this sum must be trivial. The first term yields $\varphi_1\vert \Theta_{m',m(\pi')-1}(\pi')$, which contradicts the minimality of $m(\pi')$.

\subsubsection{Uniqueness} In order to establish uniqueness we prove that at most one weakly cuspidal irreducible representation can appear in the union of $\Theta_{m',m}(\pi')$ for all non negative $m$.

Let $\pi_1$, and $\pi_2$ belong to $\Theta_{m',m_1}(\pi')$, and $\Theta_{m',m_2}(\pi')$ respectively. Following the arguments in Section 3 of \cite{Kudla}, it can be shown that the representation $\pi_1\otimes\tilde{\pi}_2$ is a constituent of the space of $G'_{m'}$--coinvariants $S_{G'_{m'}}$ of the Weil representation. Therefore, from Lemma \ref{KudlaCoinvariants} the representation $\pi_1\otimes\tilde{\pi}_2$ is a constituent of 
$$
\Ind_{P_{m}}^{G_{m}}(1)\vert_{G_{m_1}\times G_{m_2}}=\bigoplus_{i=1}^{\min\{m_1,m_2\}}\Ind_{(G_{m_1}\times G_{m_2})^{V_iP_{m}}}^{G_{m_1}\times G_{m_2}}(1),
$$
where the $V_iP_{m}$, for $i = 1,\ldots,\min\{m_1,m_2\}$, are the representatives of the orbits of $G_{m_1}\times G_{m_2}$ in $G_{m}/P_{m}$, described above. By transitivity
\begin{align*}
\Ind_{(G_{m_1}\times G_{m_2})^{V_iP_{m}}}^{G_{m_1}\times G_{m_2}}(1) 
					   = \Ind_{P_{m_1-i}\times P_{m_2-i}}^{G_{m_1}\times G_{m_2}}\circ\Ind_{(G_{m_1}\times G_{m_2})^{V_iP_{m}}}^{P_{m_1-i}\times P_{m_2-i}}(1).
\end{align*}					   
Moreover, from the explicit description of stabilizers already established, 
\begin{align*}					
\Ind_{(G_{m_1}\times G_{m_2})^{V_iP_{m}}}^{P_{m_1-i}\times P_{m_2-i}}(1) =    
1_{\GL_{m_1-i}}\otimes 1_{N_{m_1-i}}\otimes 1_{\GL_{m_2-i}}\otimes 1_{N_{m_2-i}} \otimes \mathbf{R}_{G_{i}}, 
\end{align*}
whence we deduce 
\begin{align*}
\Ind_{(G_{m_1}\times G_{m_2})^{V_iP_{m}}}^{G_{m_1}\times G_{m_2}}(1) = R_{M_{m_1-i}\times M_{m_2-i}}^{G_{m_1}\times G_{m_2}}(1_{\GL_{m_1-i}}\otimes 1_{\GL_{m_2-i}}\otimes \mathbf{R}_{G_{i}}).
\end{align*}
Since $1_{\GL_k}$ is a subrepresentation of $R_{\GL_1^k}^{\GL_k}(1)$, the character on the righthand side of the last equality is a constituent of  
\begin{align*}
& R_{M_{m_1-i}\times M_{m_2-i}}^{G_{m_1}\times G_{m_2}} (R_{\GL_1^{m_1-i}}^{\GL_{m_1-i}}(1)\otimes R_{\GL_1^{m_2-i}}^{\GL_{m_2-i}}(1)\otimes\mathbf{R}_{G_{i}})\\
& =  R_{M_{m_1-i}\times M_{m_2-i}}^{G_{m_1}\times G_{m_2}}\circ R_{M_{m_1-i,i}\times M_{m_2-i,i}}^{M_{m_1-i}\times M_{m_2-i}}(1_{\GL_1^{m_1-i}}\otimes 1_{\GL_1^{m_2-i}} \otimes \mathbf{R}_{G_{i}})\\
& =  R_{M_{m_1-i,i}\times M_{m_2-i,i}}^{G_{m_1}\times G_{m_2}}(1_{\GL_1^{m_1-i}}\otimes 1_{\GL_1^{m_2-i}} \otimes \mathbf{R}_{G_{i}})
\end{align*}
We must now distinguish the following two cases.

(a) If $m_1\neq m_2$ then, for all $i=1\ldots,\min\{m_1,m_2\}$, one of the Levi subgroups $M_{m_1-i,i}$ or $M_{m_2-i,i}$ is going to be proper in $G_{m_1}$ or $G_{m_2}$ respectively. In this case
$$
\langle \Ind_{P_{m}}^{G_{m}}(1)\vert_{G_{m_1}\times G_{m_2}}, \pi_1\otimes \tilde{\pi}_2\rangle=0
$$
since $\pi_1$ and $\pi_2$ are both weakly cuspidal. 

(b) If $m_1=m_2=k$, then for all $i=1\ldots,k$,
$$
\langle \Ind_{P_{m}}^{G_{m}}(1)\vert_{G_{m_1}\times G_{m_2}}, \pi_1\otimes \tilde{\pi}_2\rangle\leq 1.
$$

Indeed,

-- In case $0\leq i < k$, the Levi subgroups $M_{k-i,i}$ and $M_{k-i,i}$ are proper, and again 
$$
\langle \Ind_{(G_{k}\times G_{k})^{V_iP_{m}}}^{G_{k}\times G_{k}}(1), \pi_1\otimes \tilde{\pi}_2\rangle=0
$$

-- In case $i = k$, we get 
$$
\langle \Ind_{(G_{k}\times G_{k})^{V_iP_{m'}}}^{G_{k}\times G_{k}}(1), \pi_1\otimes \tilde{\pi}_2\rangle \leq \langle \mathbf{R}_{G_{k}},\pi_1\otimes\tilde{\pi}_2\rangle=\langle \pi_1,\pi_2 \rangle.
$$

Uniqueness follows from Items (a) and (b).  
\end{proof}

Now that we have proven that the Howe correspondence preserves weak cuspidality on the first occurrence, we can ask whether it also behaves nicely with respect to weak Harish-Chandra series. It turns out that this is indeed the case. The statement and proof of this result is very much the same as Theorem \ref{HoweHarish-Chandra} and it will be therefore ommited.

\bibliography{DefenseAMSSbib}

\begin{thebibliography}{10}

\bibitem{AM}
{\sc Adams, J., and Moy, A.}
\newblock Unipotent representations and reductive dual pairs over finite
  fields.
\newblock {\em Transactions of the american mathematical society 340}, 1
  (1993), 309--321.

\bibitem{AMR}
{\sc Aubert, A.-M., Michel, J., and Rouquier, R.}
\newblock Correspondance de {H}owe pour les groupes r\'eductifs sur les corps
  finis.
\newblock {\em Duke Math. J. 83}, 2 (1996), 353--397.

\bibitem{DVV}
{\sc Dudas, O., Varagnolo, M., and Vasserot, {\'E}.}
\newblock Categorical actions on unipotent representations of finite unitary
  groups.
\newblock {\em Publications math{\'e}matiques de l'IH{\'E}S 129\/} (2015),
  129--197.

\bibitem{Epequin}
{\sc Epequin, J.}
\newblock Extremal unipotent representations for the finite howe
  correspondence.
\newblock {\em Journal of Algebra 535\/} (2019), 480 -- 502.

\bibitem{Epequin2}
{\sc {Epequin}, J.}
\newblock {On the Howe correspondence and Harish-Chandra series}.
\newblock {\em arXiv e-prints\/} (Oct. 2019), arXiv:1910.13109.

\bibitem{FS}
{\sc Fong, P., and Srinivasan, B.}
\newblock Brauer trees in classical groups.
\newblock {\em Journal of Algebra 131}, 1 (1990), 179 -- 225.

\bibitem{Geck2}
{\sc Geck, M.}
\newblock On the decomposition numbers of the finite unitary groups in
  non-defining characteristic.
\newblock {\em Mathematische Zeitschrift 207\/} (1991), 83--89.

\bibitem{Geck}
{\sc Geck, M.}
\newblock A note on {H}arish-{C}handra induction.
\newblock {\em Manuscripta mathematica 80}, 4 (1993), 393--402.

\bibitem{GM}
{\sc Geck, M., and Malle, G.}
\newblock On the existence of a unipotent support for the irreducible
  characters of a finite group of lie type.
\newblock {\em Transactions of the American Mathematical Society 352\/} (2000),
  429--456.

\bibitem{Gerardin}
{\sc G\'erardin, P.}
\newblock Weil representations associated to finite fields.
\newblock {\em J. Algebra 46}, 1 (1977), 54--101.

\bibitem{GHJ}
{\sc Gerber, T., Hiss, G., and Jacon, N.}
\newblock Harish-chandra series in finite unitary groups and crystal graphs.
\newblock {\em International Mathematics Research Notices\/} (08 2014).

\bibitem{Howe-Gurevich}
{\sc {Gurevich}, S., and {Howe}, R.}
\newblock {Small Representations of Finite Classical Groups}.
\newblock {\em ArXiv e-prints\/} (Sept. 2016).

\bibitem{Howe}
{\sc Howe, R.}
\newblock Transcending classical invariant theory.
\newblock {\em Journal of the American Mathematical Society 2}, 3 (1989),
  535--552.

\bibitem{HL}
{\sc Howlett, R., and Lehrer, G.}
\newblock Induced cuspidal representations and generalised hecke rings.
\newblock {\em Inventiones mathematicae 58\/} (1980), 37--64.

\bibitem{Kudla}
{\sc Kudla, S.~S.}
\newblock On the local theta-correspondence.
\newblock {\em Inventiones mathematicae 83\/} (1986), 229--256.

\bibitem{Lusztig4}
{\sc Lusztic, G.}
\newblock A unipotent support for irreducible representations.
\newblock {\em Advances in Mathematics 94}, 2 (1992), 139 -- 179.

\bibitem{Lusztig3}
{\sc Lusztig, G.}
\newblock Irreducible representations of finite classical groups.
\newblock {\em Inventiones mathematicae 43}, 2 (Jun 1977), 125--175.

\bibitem{Lusztig}
{\sc Lusztig, G.}
\newblock On the representations of reductive groups with disconnected centre.
\newblock In {\em Orbites unipotentes et repr\'esentations - I. Groupes finis
  et Alg\`ebres de Hecke}, no.~168 in Ast\'erisque. Soci\'et\'e math\'ematique
  de France, 1988, pp.~157--166.

\bibitem{Pan5}
{\sc Pan, S.-Y.}
\newblock Lusztig correspondence and howe correspondence for finite reductive
  dual pairs.
\newblock {\em arXiv:1906.01158\/} (2019).

\bibitem{Pan3}
{\sc Pan, S.-Y.}
\newblock On theta and eta correspondences for finite symplectic/orthogonal
  dual pairs.
\newblock {\em arXiv:2006.06241\/} (2020).

\bibitem{Spaltenstein}
{\sc Spaltenstein, N.}
\newblock {\em Classes Unipotentes et Sous-groupes de Borel}.
\newblock Lecture Notes in Mathematics. Springer Berlin Heidelberg, 2006.

\bibitem{Taylor}
{\sc Taylor, J.}
\newblock {\em {On Unipotent Supports of Reductive Groups With a Disconnected
  Centre}}.
\newblock Theses, {University of Aberdeen}, Apr. 2012.

\end{thebibliography}
\bibliographystyle{acm}
\end{document}